\newtheorem{defi}{Definition}
\newtheorem{theo}{Theorem}
\newtheorem{propo}{Proposition}
\theoremstyle{remark}
\newtheorem*{rem}{Remark}
\newtheorem*{ex}{Example}
\numberwithin{figure}{section}
\numberwithin{equation}{section}
\begin{document}
 
 \title{A metric model for the functional architecture of the visual cortex}
 \date{}
\author{Noemi Montobbio$^\ast$}
\address{$^\ast$Dipartimento di Matematica, Universit\`{a} di Bologna, Italy. noemi.montobbio2@unibo.it}
\author{Alessandro Sarti$^\dagger$}
\address{$^\dagger$CAMS, CNRS - EHESS, Paris, France. alessandro.sarti@ehess.fr}
\author{Giovanna Citti$^\ddagger$}
\address{$^\ddagger$Dipartimento di Matematica, Universit\`{a} di Bologna, Italy. giovanna.citti@unibo.it}

\begin{abstract}
 The purpose of this work is to construct a model for the functional architecture of the primary visual cortex (V1), based on a structure of \emph{metric measure space} induced by the underlying organization of receptive profiles (RPs) of visual cells. In order to account for the horizontal connectivity of V1 in such a context, a diffusion process compatible with the geometry of the space is defined following the classical approach of K.-T. Sturm \cite{sturm98}. The construction of our distance function does neither require any group parameterization of the family of RPs, nor involve any differential structure. As such, it adapts to non-parameterized sets of RPs, possibly obtained through numerical procedures; it also allows to model the lateral connectivity arising from non-differential metrics such as the one induced on a pinwheel surface by a family of filters of vanishing scale. On the other hand, when applied to the classical framework of Gabor filters, this construction yields a distance approximating the sub-Riemannian structure proposed as a model for V1 by Citti and Sarti \cite{cs06}, thus showing itself to be consistent with existing cortex models.
\end{abstract}

\maketitle

\section*{Introduction}\label{intro}

 The primary visual cortex (V1) is the first cortical area which receives the visual signal from the retina. The first celebrated description of its structure dates back to the '60s, when Hubel and Wiesel discovered \cite{HW} (see also \cite{hubel}) that cortical neurons are not only sensible to the intensity of the visual stimulus, but also to other variables,  called ``engrafted'', such as orientation, scale, velocity. Precisely, every retinal location is associated to a whole set (called hypercolumn) of cells of V1, sensitive to all the possible values of the considered feature. The first processing of a visual stimulus in V1 is performed by a class of neurons called \emph{simple cells}. The activation of a simple cell in response to an image $I(x,y)$ on the retinal plane can be modeled as a linear integral operator with associated kernel $\psi(x,y)$, called the \emph{receptive profile} (RP) of the neuron. This means that the RPs of simple cells can be represented, up to a first approximation, by means of a set $\{\psi_p\}_{p \in \mathcal{G}} \subseteq L^2(\mathbb{R}^2)$ of \emph{linear filters} on the plane. Such a family \emph{lifts} the image to the set $\mathcal{G}$ of parameters encoding the features extracted by the filters. Typically, this set is of the form $\mathcal{G} = \mathbb{R}^2 \times \mathcal{F}$, where $(x,y) \in \mathbb{R}^2$ denotes the point of the retina on which the profile is centered (typically with a strongly concentrated support), thus encoding the feature of \emph{position}, while $\Phi \in \mathcal{F}$ expresses the engrafted variables.\\
A well-estabilished model for the RPs of V1 simple cells is represented by Gabor filters (\cite{jonpal}, \cite{daug}, \cite{lee}): the whole bank of filters $\{\psi_{x,y,\theta}\}_{x,y,\theta}$ is obtained by translations of $(x,y) \in \mathbb{R}^2$ and rotations of $\theta \in S^1$ of a \emph{mother function}
\[
 \psi(u,v) = \exp\left(\frac{2\pi i u}{\lambda}\right) \exp\bigg(-\frac{u^2+v^2}{2\sigma^2}\bigg).
\]
Thus, the corresponding \emph{feature space} is $\mathcal{G}=\mathbb{R}^2\times S^1$, encoding \emph{position} and \emph{orientation}.

 The neural activity is known to propagate across V1 through the so-called \emph{horizontal connections}, linking neurons sensitive to similar orientations but belonging to different hypercolumns \cite{gilwie}. The spatial extent and the marked orientation specificity of such connections have been investigated in a number of experiments (see e.g. \cite{gilwie89}, \cite{bosking}). These properties are believed (\cite{gilbert}, \cite{petitond}) to be the neurophysiological counterpart to the perceptual rules expressed by the concept of \emph{association field}, introduced by Field, Hayes and Hess in 1993 \cite{field} to describe the results of their psychophysical experiments on contour integration.\\
 Through the past twenty years, a number of models were proposed that describe the functional architecture of V1 through differential structures. See e.g. \cite{hoffman}, \cite{petitond}, \cite{zucker}, \cite{cs06}, \cite{scp}. See also \cite{neuromat} for a review. In these models, V1 is represented as a feature space, typically endowed with a Lie group structure. For instance, the \emph{rototranslation group} $\mathbb{R}^2\times S^1$ is taken into consideration in \cite{cs06}. In this work, $\mathbb{R}^2\times S^1$ is endowed with a sub-Riemannian structure which is invariant with respect to the group law. The spreading of neural activity in V1 through the lateral connectivity is described by means of a propagation along the integral curves of this structure.
 
 The aim of this work is to propose a model of V1 as a metric measure space whose structure is induced directly by the RPs of simple cells. This suggests that the geometrical rules controlling the intracortical connections of V1 may be recovered from the shape of such RPs.\\
 Our definition of the \emph{cortical metric space} is straightforward. V1 is represented by the family $\mathcal{G}$ of parameters indexing a bank of filters $\{\psi_p\}_{p \in \mathcal{G}}$. The distance between two points $p_0,p_1 \in \mathcal{G}$ is defined as
 \[
  d(p_1,p_0) := \|\psi_{p_1}-\psi_{p_0}\|_{L^2(\mathbb{R}^2)}.
 \]
Therefore, the filters do not only provide a set of parameters on which to define a geometric structure, but rather they contribute to the characterization of such a structure. This metric space is then equipped with its associated spherical Hausdorff measure. Such a construction does not require any invariance or group structure onto the set $\mathcal{G}$ indexing the RPs: the distance $d$ would still be well defined even for a non-parameterized set of filters known numerically. \\
 As for the propagation along the horizontal connectivity, the idea is still to consider a diffusion process, associated to a suitable operator which must play, in this setting, a role analogous to that of the Laplace-Beltrami operator in the differential case. To this end, we will refer to the classical approach of K.-T. Sturm (\cite{sturm95b}, \cite{sturm98}), which provides a general method to construct a diffusion process on a metric measure space $(X,d,\mu)$. This technique consists of defining a Dirichlet form on $L^2(X,\mu)$ whose associated positive self-adjoint operator has a heat kernel admitting Gaussian estimates in terms of the distance $d$, provided that a Measure Contraction Property (MCP, see Definition 3) on the space is satisfied.
 
 We will give all the details with regard to the feature space determined by a family of Gabor filters. This example is very meaningful for two reasons. First, it is useful in terms of intuition and manageability, since the invariances of the feature space in this setting make it possible to perform some explicit calculations (it is nevertheless important to notice that these invariances are not taken into account in the construction of the metric space in the general case). Second, it links the present metric model to the existing differential models: indeed, the distance function obtained in this case turns out to be locally equivalent to a Riemannian distance which approximates the sub-Riemannian structure defined on $\mathbb{R}^2\times S^1$ in the model presented in \cite{cs06}.\\
 As a motivation behind the choice of the general setting of metric measure spaces, we then present a further example. This consists of a sub-Riemannian surface in $\mathbb{R}^2 \times S^1$, obtained as the feature space defined by a sub-family of Gabor filters with vanishing scale: the restriction of the distance onto the surface is indeed not associated to any differential structure. We prove that such a metric measure space satisfies the MCP, thus providing a model for lateral connectivity in environments such as pinwheel surfaces.
 
 In the last part of the paper, we propose to approximate the propagation along the horizontal connectivity through repeated integrations against a kernel, computed explicitly in terms of the distance function, which estimates the heat kernel of the diffusion process for small times. With a view to dealing with non-parameterized banks of filters, possibly obtained through numerical procedures, it is indeed desirable to dispose of an explicit algorithm to compute the cortical connectivity associated to them. We present some numerical simulations, comparing the propagations obtained respectively through a discretized heat equation and through repeated integrations against such kernel, in the case of Gabor filters. We refer to our parallel paper \cite{neuro} for a more extensive discussion on this approach.

\section{Background}\label{background}

\subsection{Receptive profiles and simple cells}\label{simple}

 We first provide the necessary background on some structures at the basis of the visual system. The visual pathways start from the \emph{retina}, from which the visual signal is conveyed through the optic nerve to the \emph{lateral geniculate nucleus} (LGN). This structure is the main central conjunction to the occipital lobe, in particular to the \emph{primary visual cortex} (V1). From V1, different specialized parallel pathways depart, leading to higher cortical areas performing further processing.

Through the above-mentioned connections, each cell is linked to a specific domain $D$ of the retina which is referred to as its \emph{receptive field} (RF). A retinal cell in the RF can react in an excitatory or in an inhibitory way to a punctual luminous stimulation, with different modulation, and the function $\psi : D \rightarrow \mathbb{R}$ which measures the reaction of the neuron at every retinal location $(x,y)$ is called \emph{receptive profile} (RP).
 Certain types of visual neurons are shown to act, at least to a first approximation, as \emph{linear filters} on the optic signal. This means that the response of the cell to a visual stimulus $I$, defined as a function on the retina, is given by
\begin{equation}\label{linfilt}I_\psi := \int_D I(x,y)\psi(x,y)dxdy.\end{equation}

The shape of the RP of a neuron contains information about the \emph{features} that it extracts from a visual signal. For example, the local support of $\psi$ makes it sensitive to \emph{position}, i.e. the neuron only responds to stimuli in a localized region of the image. Or again, a receptive profile with an elongated shape will be sensitive to a certain \emph{orientation}, i.e. it will respond strongly to stimuli consisting of bars collinear with this shape. If we denote the whole set of RPs by $\{\psi_p\}_{p \in \mathcal{G}}$, where $\mathcal{G}$ is a set of indices, we may regard each $p \in \mathcal{G}$ as representing the features extracted by the filter $\psi_p$: in these terms, we shall refer to $\mathcal{G}$ as the \emph{feature space} associated to the bank of filters $\{\psi_p\}_p$.\\
It is a classical result of neurophysiology that the RPs of LGN cells are best modeled as Laplacians of Gaussians.
As for the primary visual cortex, two main classes of cells can be observed in this area. These neurons are referred to as \emph{simple} and \emph{complex} cells, and they were first discovered by Hubel and Wiesel in the '60s \cite{HW}.\\
\begin{figure}[!htbp]
\centering
 \includegraphics[width=.8\textwidth]{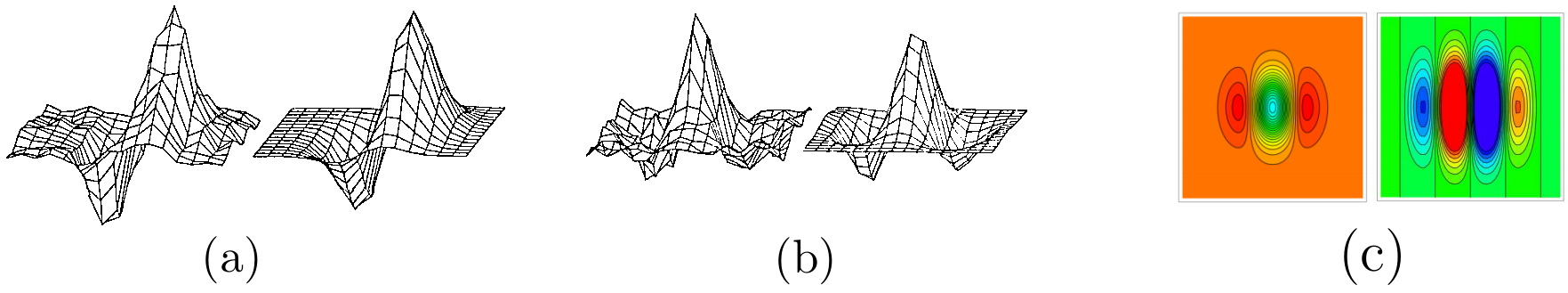}
 \caption{(a) Left: an example of experimentally measured odd RPs of simple cells in cat V1. Right: the best-fitting 2D Gabor function for the cell's RP. (b) The same as (a) for an even RP. Both examples are taken from \cite{daug}. (c) A quadrature pair of Gabor RPs, given by the real (left) and imaginary (right) parts of a complex Gabor function. Source: \cite{petitot}.} \label{RP_V1}
\end{figure}
 Simple cells are the first neurons in the visual pathway showing orientation selectivity, given by a strongly anisotropic RP, as shown in Figure \ref{RP_V1}. They receive most of the outgoing projections from the LGN: it is presumed that each simple receptive field arises from multiple isotropic LGN receptive fields converging in a line \cite{hubel}.
 The set of RPs of V1 simple cells has classically been modeled \cite{jonpal,daug,lee} (see Figure \ref{RP_V1}a-b) through a bank of \emph{Gabor filters} $\{\psi_{x,y,\theta}\}_{x,y,\theta}$, obtained by translations $T_{(x,y)}$ of $(x,y) \in \mathbb{R}^2$ and rotations $R_\theta$ of $\theta \in S^1$ of a \emph{mother filter} $\psi_{0,0,0}$: 
\begin{equation}\label{mother} \psi_{x,y,\theta}(u,v)= \psi_{0,0,0}\left(T^{-1}_{(x,y)}R^{-1}_{\theta}(u,v)\right), \quad \psi_{0,0,0} (u,v) = \exp\left(\frac{2\pi i u}{\lambda}\right) \exp\bigg(-\frac{u^2+v^2}{2\sigma^2}\bigg).
 \end{equation}
 Note that these are complex-valued functions: each filter $\psi_{x,y,\theta}$ actually represents two RPs, given by its real and imaginary parts, sharing the same orientation but shifted by $90^\circ$ in phase. These are referred to as a \emph{quadrature pair} of cells. Real and imaginary parts of Gabor filters represent so-called \emph{even} and \emph{odd} cells respectively (see Figure \ref{RP_V1}c).\\
 In this case, the feature space is $\mathbb{R}^2 \times S^1$: $(x,y) \in \mathbb{R}^2$ encodes the position at which the filter is centered and $\theta \in S^1$ expresses its preferred orientation. For the sake of simplicity and legibility, we take the scale $\sigma$ to be fixed, but it may be let vary as well, yielding a richer feature space.\\
 The information extracted by simple cells is believed to determine the behavior of complex cells, which perform a second order analysis: in particular, according to the \emph{energy model} \cite{complex}, the response of each complex cell is modeled as the square sum of a quadrature pair of simple cells.
 This leads to the phase invariance of these neurons, whose behavior cannot be described through linear filtering.

 \subsection{Horizontal connections and association fields}\label{horizontal}
 
 It has been shown \cite{HW}, through recording of the responses to certain stimuli (e.g. oriented bars passing through the RF), that the preferred retinal position and orientation of V1 neurons are roughly constant moving perpendicularly to the cortical surface. On the other hand, the preferred orientation varies gradually in the directions parallel to the surface, giving rise to groupings called \emph{orientation hypercolumns}: these contain cells sensitive to approximately the same retinal position, but span all orientations, which are ``engrafted'' onto the positional map with a finer subdivision \cite{hubel}. The intracortical circuitry can be described in terms of two main mechanisms: a short-range connectivity taking place within each hypercolumn, which essentially selects the orientation of maximum output in response to a visual stimulus and suppresses the others; and a long-range ``horizontal'' (or lateral) connectivity, connecting neurons belonging to different hypercolumns but sensitive to similar orientations. The latter is also shown \cite{bosking} to spread around each neuron along the axis of its preferred orientation, and it is believed to be at the basis of the ability of our visual system to perform perceptual grouping (\cite{gilbert}, \cite{petitond}). Indeed, a \emph{global} analysis is necessary in order to correctly recognize objects and interpret a visual scene: single receptive profiles alone cannot account for such non-local features.
 
 \begin{figure}[h]
 \centering
 \includegraphics[width=.8\textwidth]{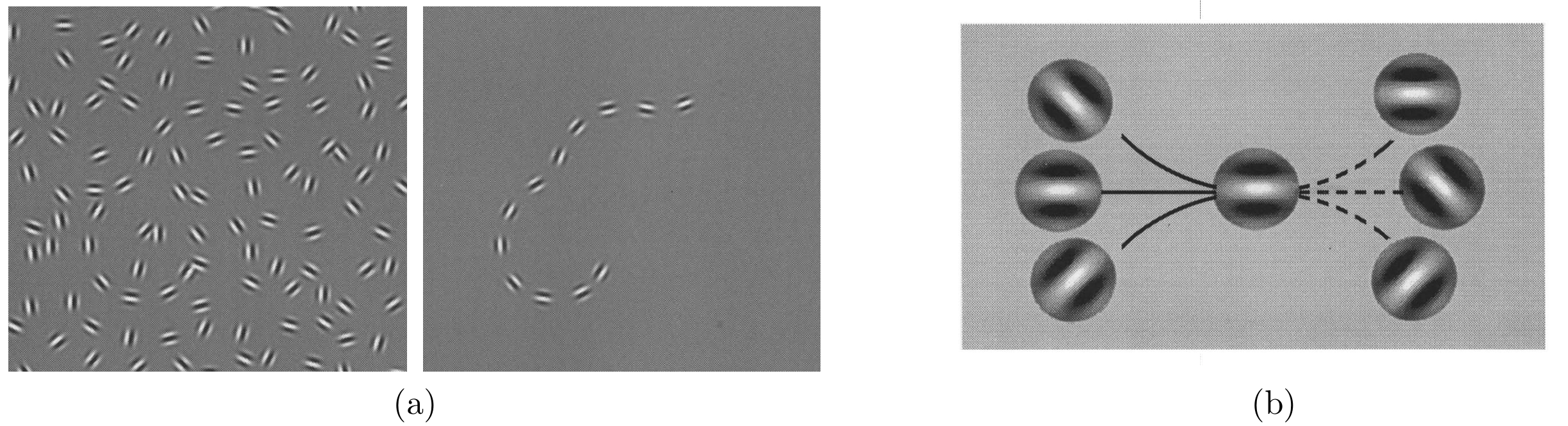}
 \caption{(a) Path segregation. (b) A schematic representation of the notion of association field. Images taken from \cite{field}. \label{perceptual}}
 \end{figure}
 The processing mechanism taking place throughout the visual pathways allows to efficiently group local items into extended contours, and to segregate a path of elements from its background (see Figure \ref{perceptual}a).
 These perceptual mechanisms in V1 have been described through the concept of \emph{association field} \cite{field}, a schematic representation of which is displayed in Figure \ref{perceptual}b: this abstract object characterizes the geometry of the mutual influences between V1 cells depending on their orientation and reciprocal position. In other words, the excitation of a neuron is strengthened by the activation of surrounding cells with certain \emph{relative features} with respect to it. In particular, the strongest correlation takes place between those edge elements that are either \emph{collinear} or \emph{co-circular}. The psychophysical analysis performed in \cite{field} revealed that such influences link neurons even with markedly separated RFs. The comparable spatial extent of association fields and horizontal connections, together with their shared orientation specificity, make the lateral connectivity a potential anatomical implementation of this perceptual phenomenon.

\subsection{Sub-Riemannian models of V1}\label{subriemannian}

 From a mathematical point of view, the hypercolumnar organization of V1 can be described by saying that at each retinal position there exists a full \emph{fiber} of possible orientations. This idea led to the representation of V1 as a \emph{fiber bundle} whose basis is the space of retinal locations, first introduced by Petitot and Tondut in 1999 \cite{petitond}. Their model yields a 3D Heisenberg group structure. 

 A more complete description, allowing non-equioriented boundaries, was given in \cite{cs06} in terms of a sub-Riemannian structure on the rototranslation group, associated to the bank of filters (\ref{mother}). In this case, for every fixed retinal position $(x,y)$, the maximum of the function $I_\psi(x,y,\theta)$ in the variable $\theta$ is attained at a point $\Theta(x,y)$ which represents the orientation of the level lines of the image $I$ at the point $(x,y)$. The images of these lines through the map $(x,y) \mapsto (x,y,\Theta(x,y))$ are called \emph{lifted level lines}, and their tangent vector at every point can be written as a linear combination of the vector fields
  \begin{equation}
  \label{vecY} Y_1 = -\sin\theta \partial_x + \cos\theta \partial_y \: , \quad Y_2 = \partial_\theta.
  \end{equation}
 These vector fields define a bidimensional subbundle of the tangent bundle to $\mathbb{R}^2 \times S^1$, referred to as the \emph{horizontal tangent bundle}, thus determining a sub-Riemannian structure on $\mathbb{R}^2 \times S^1$. The Lie algebra generated by $Y_1$ and $Y_2$ through the bracket operation between vector fields is the whole Euclidean tangent plane, since $[Y_1,Y_2] = \cos(\theta)\:\partial_x + \sin(\theta)\:\partial_y =: Y_3$.
 In other words, $Y_1$ and $Y_2$ satisfy the H\"{o}rmander rank condition. This leads, by the Chow theorem, to the so-called \emph{connectivity property}: any couple of points in $\mathbb{R}^2 \times S^1$ can be connected through a \emph{horizontal curve}, i.e. an integral curve of a section of the horizontal tangent bundle.\\
 The lateral propagation of neural activity in the cortical space is described in \cite{cs06} through the sub-Riemannian heat equation $\partial_t u = \Delta u$, where $\Delta = Y_1^2 + Y_2^2$. In this setting, the association field around a point $(x_0,y_0,\theta_0) \in \mathbb{R}^2 \times S^1$ is then characterized as a family of integral curves of $Y_1$ and $Y_2$ starting at this point. Namely, $\gamma' = Y_{1 \: | \gamma} + k Y_{2 \: | \gamma}$ and $\gamma(0) = (x_0,y_0,\theta_0)$, where $k$ varies in $\mathbb{R}$.\\
 The evolution of the activity of V1 neurons is influenced by a combination of intra-columnar and lateral connections. In \cite{cs06}, the sub-Riemannian diffusion modeling the horizontal connections and the mechanism of selection of maxima implemented by the short-range connectivity have been combined by alternating their action iteratively: precisely, each iteration consists of a first step of diffusion in a finite time interval and a second step of non maximal suppression. The time interval is then sent to zero. See also \cite{bresscow03} and \cite{bresscow02}, where the connections between each couple of neurons are represented by a weight function which is decomposed as the sum of two terms modeling these two mechanisms.\\
 Different diffusion equations in this sub-Riemannian setting, such as the Fokker-Planck equation, have also been used in other works (see e.g. \cite{mumford}, \cite{edge-stat} for a stochastic point of view).

\section{The space of features as a metric space}\label{model}

 In this section we outline our model, whose basic idea is the construction of a metric space encoding the local geometry of the cortex, defined by a notion of ``local correlation'' between RPs of simple cells. As in the differential models described above, we will then characterize the lateral connectivity through a propagation with respect to the metric structure. The space on which the distance function will be defined is the \emph{feature space} $\mathcal{G}$ indexing a family of filters $\{\psi_p\}_{p\:\in\:\mathcal{G}}$ chosen to model the RPs of V1 simple cells. As remarked above, in the case of a family of Gabor filters of fixed scale the feature space is $\mathbb{R}^2 \times S^1$. In effect, we will show that the distance function induced by Gabor filters on this space is locally equivalent to a Riemannian distance on $\mathbb{R}^2 \times S^1$ approximating the Carnot-Carath\'{e}odory distance associated to the sub-Riemannian structure defined in \cite{cs06}.\\
 The main new feature of our model is that the cortical geometry is directly induced by the RPs. This means that any bank of filters used to represent a family of RPs can be given an associated connectivity pattern through this technique.

\subsection{The cortical distance}\label{distance}

As said earlier, the family of RPs of simple cells of V1 can be modeled by a bank of linear filters $\{\psi_p\}_{p\:\in\:\mathcal{G}} \subseteq L^2(\mathbb{R}^2)$. In this section, we define a metric structure on the set of parameters $\mathcal{G}$ associated to such a family of filters.
\begin{defi}
 Let $\{\psi_p\}_{p\:\in\:\mathcal{G}}$ be a family of real- or complex-valued functions in $L^2(\mathbb{R}^2)$. We call $\mathcal{G}$ the \emph{feature space} associated to the family $\{\psi_p\}$.\\
 We then define the distance function $d \: : \: \mathcal{G}\times \mathcal{G} \longrightarrow \mathbb{R}$,
 \begin{equation}\label{kerdist} d(p,p_0) := \|\psi_p-\psi_{p_0}\|_{L^2(\mathbb{R}^2)},\end{equation}
 and the \emph{generating kernel} $K \: : \: \mathcal{G}\times \mathcal{G} \longrightarrow \mathbb{R}$,
 \begin{equation}
  K(p,p_0) := Re\langle\psi_p,\psi_{p_0}\rangle_{L^2}.
 \end{equation}
\end{defi}
 The introduction of the kernel $K$ was inspired by the definition of the reproducing kernel induced by a family of admissible wavelets on the image of the associated wavelet transform (see \cite{rkwavelet}, \cite{deng}). \\
 Note that $d^2 (p, p_0)  = \| \psi_p - \psi_{p_0} \|_{L^2}^2 = \|\psi_{p}\|_{L^2}^2 + \|\psi_{p_0}\|_{L^2}^2 - 2Re\langle\psi_p,\psi_{p_0}\rangle_{L^2}$.
Since we can assume the filters to be normalized to have $L^2$-norm equal to $t$, the above expression only depends on the real part of the inner product between the two filters, that is on the kernel $K$:
\begin{equation}\label{dK}
   d^2 (p, p_0)  = 2t - 2K(p,p_0).
\end{equation}
 $K$ can be thought of as a connectivity kernel, expressing the strength of \emph{correlation} between two profiles. Of course, the distance between two points increases as this correlation fades.

\subsection{Local distance and gluing}\label{gluing}

 The function we defined is obviously a distance on $\mathcal{G}$, since it is a restriction of the $L^2$ distance function. However, one may want to introduce some constraints on which filters can directly interact with one another in determining the geometry of the space -- for instance, this can be done to inspect the behavior of the connectivity w.r.t. certain features encoded in the RPs. We will see a concrete example of this situation in the case of Gabor filters, where we will be able to \emph{isolate} the spreading of neural activity along the axis of the preferred orientation of the starting RP, while discarding the contributions along the orthogonal axis.\\
 Imposing such constraints corresponds to defining around each point $p_0 \in \mathcal{G}$ a \emph{local patch} $\mathcal{P}(p_0)\subseteq\mathcal{G}$, and to restrict the definition of $d$ to this set. The following question arises naturally: is it possible to glue all these local distances together to obtain a global distance function on the feature space? In order to get to this result, we will need to make one further assumption on the patches $\mathcal{P}(p_0)$. We now define a new function $\tilde{d}$ as follows.
 \begin{defi}\label{defdist}
 For $p,p_0 \in \mathcal{G}$, we set
 \begin{align}\label{globdist}
 \tilde{d}(p,p_0) := \inf \left\{ \:\sum_{j=1}^{N} d(q_{j-1},q_j) \: : \: N \in \mathbb{N},\: q_0=p_0,\:q_N=p,\:q_j \in \mathcal{P}(q_{j-1}) \:\forall j \: \right\}.
 \end{align}
 \end{defi}
 Note that, in general, the existence of a sequence $\{q_j\}_{j=1,...,N}$ such that $q_0=p_0,\:q_N=p$ and $\:q_j \in \mathcal{P}(q_{j-1}) \:\forall j=1,...,N$ is not guaranteed for any couple of points $(p_0,p)$. If such a sequence does not exist, we consider the distance between the two points to be infinite. However, this would be a ``degenerate'' case where there are isolated points or regions of the feature space, corresponding to neurons whose activations are mutually independent.
\begin{propo}\label{dtilde}
 Given a set $\mathcal{G}$, define around each point $p_0$ a \emph{patch} $\mathcal{P}(p_0) \subseteq \mathcal{G}$ such that
 \begin{equation}\label{patchball}
  \forall p_0 \in \mathcal{G} \quad \exists \varepsilon>0 \; : \; B_\varepsilon(p_0) := \{p \in \mathcal{G} \: : \: d(p,p_0)<\varepsilon \} \subseteq \mathcal{P}(p_0). \end{equation}
 Then $\tilde{d} \: : \: \mathcal{G}\times \mathcal{G} \longrightarrow \mathbb{R}$ defined as above satisfies:
 \begin{enumerate}[(i)]
  \item $\tilde{d}(p,q) \geq 0 \quad \forall p,q \in \mathcal{G}$,
  \item $\tilde{d}(p,s)+\tilde{d}(s,q) \geq \tilde{d}(p,q) \quad \forall p,s,q \in \mathcal{G}$,
  \item $\forall p,q \in \mathcal{G}, \; \tilde{d}(p,q)=0 \: \Leftrightarrow \: p=q$.
 \end{enumerate}
\end{propo}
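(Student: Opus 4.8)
The plan is to verify the three properties essentially by hand, exploiting the fact that the local distance $d$ is itself the restriction of the genuine $L^2$ metric and therefore already satisfies the triangle inequality on all of $\mathcal{G}$. Property (i) is immediate: every summand $d(q_{j-1},q_j)$ is nonnegative, so every admissible chain has nonnegative length and the infimum over chains is nonnegative; in the degenerate case where no admissible chain from $p_0$ to $p$ exists, $\tilde{d}$ takes the value $+\infty$ by convention, which is again $\geq 0$.

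For the triangle inequality (ii), the idea is the standard concatenation of chains. Given an admissible chain realizing (approximately) $\tilde{d}(s,q)$, i.e. $r_0=q,\dots,r_M=s$ with $r_i\in\mathcal{P}(r_{i-1})$, and one realizing $\tilde{d}(p,s)$, i.e. $q_0=s,\dots,q_N=p$ with $q_j\in\mathcal{P}(q_{j-1})$, I would splice them at $s$ to form the sequence $q=r_0,\dots,r_M=s=q_0,\dots,q_N=p$. The one point requiring attention is that the membership condition $q_j\in\mathcal{P}(q_{j-1})$ is \emph{not} symmetric, so I must check that it survives at the junction: this holds because the first new point after $s$ is $q_1\in\mathcal{P}(q_0)=\mathcal{P}(s)=\mathcal{P}(r_M)$. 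The spliced chain is thus admissible from $q$ to $p$ and its length is the sum of the two lengths, whence $\tilde{d}(p,q)$ is bounded above by that sum; taking infima over the two families of chains independently yields $\tilde{d}(p,q)\leq\tilde{d}(p,s)+\tilde{d}(s,q)$.

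The heart of the argument is property (iii), and the key observation is that $\tilde{d}$ dominates $d$. Indeed, for any admissible chain $q_0=p_0,\dots,q_N=p$, the triangle inequality for the $L^2$ distance $d$ gives $\sum_{j=1}^{N}d(q_{j-1},q_j)\geq d(q_0,q_N)=d(p,p_0)$; passing to the infimum, $\tilde{d}(p,p_0)\geq d(p,p_0)$ for all $p,p_0$. Hence if $\tilde{d}(p,q)=0$ then $d(p,q)=0$, and since $d$ is a genuine distance on $\mathcal{G}$ (being the restriction of the $L^2$ distance) this forces $p=q$. Conversely, to see $\tilde{d}(p,p)=0$ I would produce a zero-length admissible chain: the self-loop $q_0=q_1=p$ is admissible precisely because the patch hypothesis (\ref{patchball}) forces $p\in B_\varepsilon(p)\subseteq\mathcal{P}(p)$, and its length is $d(p,p)=0$; combined with (i) this gives $\tilde{d}(p,p)=0$.

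The only genuinely delicate points are thus the nonsymmetry of the patch membership relation in the concatenation step, and the role of assumption (\ref{patchball}): without the containment of a small ball in each patch, distinct nearby points might fail to be joinable by any chain, so that $\tilde{d}$ could degenerate to $+\infty$ off the diagonal and the self-loop argument for $\tilde{d}(p,p)=0$ would break down. Given (\ref{patchball}), however, every property reduces to the two elementary facts that $d$ is nonnegative and satisfies the triangle inequality globally, so no further estimate is required.
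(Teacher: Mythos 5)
Your proof is correct, and its overall shape --- direct verification of (i)--(iii), with concatenation of chains for the triangle inequality --- is the same as the paper's; in particular, the paper's proof of (ii), which writes $\tilde{d}(p,s)+\tilde{d}(s,q)$ as an infimum over chains constrained to pass through $s$ and compares it with the unconstrained infimum, is exactly your splice argument in compressed form (including the junction issue with the asymmetric relation $q_j \in \mathcal{P}(q_{j-1})$, which you spell out and the paper leaves silent). Where you genuinely diverge is (iii). The paper argues by cases on whether $p \in \mathcal{P}(p_0)$: if not, then $p \notin B_\varepsilon(p_0)$ and it concludes $\tilde{d}(p,p_0)\neq 0$; if so, it invokes the fact that $d$ is a distance on $\mathcal{P}(p_0)$. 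Both cases implicitly rest on precisely the lemma you isolate: the domination $\tilde{d} \geq d$, obtained by applying the $L^2$ triangle inequality along any admissible chain. Making this explicit buys you two things: the case split becomes unnecessary (once $\tilde{d}\geq d$, the implication $\tilde{d}(p,q)=0 \Rightarrow p=q$ follows at once from $d$ being a distance), and the ``consequently'' steps that the paper leaves to the reader are actually justified. You also prove the reverse implication $\tilde{d}(p,p)=0$ explicitly, via the self-loop $q_0=q_1=p$, which is admissible because condition (\ref{patchball}) forces $p \in \mathcal{P}(p)$; the paper omits this direction altogether. The only item the paper includes that you skip is the preliminary remark that the local distances glue consistently on overlapping patches (well-definedness of $\tilde{d}$), but this is immediate from the fact that $d$ is globally the $L^2$ distance --- the same fact underlying your domination lemma --- so nothing is lost.
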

\begin{proof}
First, $\tilde{d}$ is well-defined. This means verifying that the local distance functions coincide on overlapping patches. Indeed, this happens by construction, since $d(p,p_0)$ is always equal to the $L^2$ distance between $\psi_p$ and $\psi_{p_0}$.\\
Second, $\tilde{d}$ verifies the properties.
 \begin{enumerate}[(i)]
 \item $\tilde{d}$ is obviously non negative.
 \item As for the triangle inequality, we have: 
 \begin{align*} & \tilde{d}(p,s)+\tilde{d}(s,q) \\ & = \inf \left\{ \:\sum_{j=1}^{N} d(q_{j-1},q_j) \: | \: N \in \mathbb{N},\: q_0=q,\: q_N=p,\:q_j \in \mathcal{P}(q_{j-1}) \:\forall j, \exists j : q_j=s \: \right\} \\
 & \geq \inf \left\{ \:\sum_{j=1}^{N} d(q_{j-1},q_j) \: | \: N \in \mathbb{N},\: q_0=q,\: q_N=p,\:q_j \in \mathcal{P}(q_{j-1}) \:\forall j \: \right\} = \tilde{d}(p,q).
 \end{align*}
 \item Lastly, we have to prove that $\tilde{d}(p,p_0)=0 \: \Leftrightarrow \: p=p_0$.
 Suppose $p\neq p_0$. From (\ref{patchball}), there exists an $\varepsilon>0$ such that $B_\varepsilon(p_0)\subseteq\mathcal{P}(p_0)$. Now,
 \begin{itemize}
 \item if $p \notin \mathcal{P}(p_0)$, then $p \notin B_\varepsilon(p_0)$ and consequently $\tilde{d}(p,p_0)\neq 0$;
 \item on the other hand, if $p$ is in $\mathcal{P}(p_0)$, then $\tilde{d}(p,p_0)\neq 0$ for the properties of $d$, which is a distance on $\mathcal{P}(p_0)$.
 \end{itemize}
 \end{enumerate}
\end{proof}

 \begin{rem}\label{remsymm}
 Given a sequence $p=q_0, q_1,\ldots,q_N=q$, the condition $q_j \in \mathcal{P}(q_{j-1})$ does not imply having $q_{j-1} \in \mathcal{P}(q_j)$. Therefore, in general, Proposition \ref{dtilde} yields an \emph{asymmetric} distance $\tilde{d}$. This intuitively means that, given two points $p$ and $q$, getting from $p$ to $q$ may be harder than getting from $q$ to $p$, i.e. $\tilde{d}(p,q)>\tilde{d}(q,p)$. In practical applications, this could represent for example the situation where $p$ and $q$ are points in space and $q$ is uphill w.r.t. $p$. See \cite{quasimetric} as a reference on \emph{quasimetric spaces}.\\
 However, recall that the distance we are defining should model the lateral connectivity in V1. Due to the evidence that horizontal connections are largely reciprocal \cite{reciprocal}, it is reasonable to model this phenomenon through a symmetric distance. Since the construction of the patches $\mathcal{P}(\cdot)$ was meant to restrict which cells can interact with one another, it is natural to define them so that $p$ is connected to $q$ if and only if $q$ is connected to $p$. This means requiring that $q \in \mathcal{P}(p) \Leftrightarrow p \in \mathcal{P}(q)$, which implies the symmetry of $\tilde{d}$ by considering for each sequence $p=q_0, q_1,\ldots,q_N=q$ the reversed sequence $\{q_{N-j}\}_{j=1,...,N}$. In the following, the symmetry of the distance is taken as an assumption.
 \end{rem}
 
 To sum up, the kernel distance defined in (\ref{kerdist}) may be treated as a local object by restricting it to suitable patches defined around each point. In order to have a meaningful distance on the whole feature space taking into account these constraints, the local distance functions must be glued together: the above Proposition states that, under reasonable conditions on the choice of the patches, this yields a well-defined global distance on $\mathcal{G}$.

\subsection{The case of Gabor filters}\label{gaborcase}

 In this section, we show the results of applying the model described to the classical example of a bank of Gabor filters. We then prove that the distance obtained in this case is locally equivalent to a Riemannian approximation to the sub-Riemannian metric introduced in \cite{cs06}.\\
 Let us consider the set $\{\psi_{x,y,\theta}\}_{x,y,\theta}$ of Gabor filters introduced in (\ref{mother}). For each value of $\lambda>0$ and $\sigma>0$, one obtains a family of filters parameterized by $p=(x,y,\theta) \in \mathbb{R}^2\times S^1$ where each of the filters $\psi_{x,y,\theta}$ has wavelength $\lambda$ and scale $\sigma$.
 
\paragraph{\textbf{The distance function}}\label{gabordist}

 Fix $\lambda,\sigma >0$ and denote $p = (x,y,\theta)$ and $p_0 = (x_0,y_0,\theta_0)$. As anticipated, there is some invariance in the behavior of the kernel $K(p,p_0)=Re\langle \psi_p , \psi_{p_0} \rangle_{L^2(\mathbb{R}^2)}$ in the Gabor case. Specifically, through a straightforward calculation one obtains:
  \[K\big((x,y,\theta), (x_0,y_0,\theta_0)\big) = K\big( (R_{\theta_0}T_{(x_0,y_0)}(x,y),\theta-\theta_0),\; (0,0,0) \big). \]
 It is therefore sufficient to compute explicitly the expression of $K(p,p_0)$ for $p_0 = (0,0,0)$. We have:
 \begin{align*}\langle \psi_p , \psi_{0} \rangle_{L^2(\mathbb{R}^2)} =
  \sigma^2\pi \: \exp\left( -\frac{x^2}{4\sigma^2} -\frac{y^2}{4\sigma^2} - \frac{2\sigma^2\pi^2 (1 - \cos\theta)}{\lambda^2} \right) \: \exp\left( -i \pi \: \frac{x(1 + \cos\theta) + y\sin\theta}{\lambda} \right).
 \end{align*}
 The real part of this scalar product gives the kernel $K$. Of course, the same invariance holds for the distance $d$. Since the squared $L^2$-norm of each of the filters (\ref{mother}) is equal to $\sigma^2\pi$, we have:
 \begin{equation}\label{distgabor}
 d^2 (p,0) = 2\sigma^2\pi - 2\sigma^2\pi \: \exp \left( -\frac{x^2}{4\sigma^2} -\frac{y^2}{4\sigma^2} - \frac{2\sigma^2\pi^2 (1 - \cos\theta)}{\lambda^2} \right) \cdotp \cos\left( \pi \: \frac{x(1 + \cos\theta) + y\sin\theta}{\lambda} \right).
 \end{equation}
 Note that the distance $d$ depends on $\sigma$ and $\lambda$, since the scale and wavelength of the filters naturally influence its spatial extent and oscillatory behavior respectively.

\paragraph{\textbf{Local patches}}\label{gaborpatch}
 
 The balls $B_\varepsilon(p_0) = B_\varepsilon\big((x_0,y_0,\theta_0)\big)$ of the distance $d$ over a certain radius $\varepsilon$ are not connected. This is a consequence of the oscillatory behavior of the distance along the axis orthogonal to the preferred orientation $\theta_0$ of the starting filter $\psi_{p_0}$: the central connected component of the ball contains the points of $\mathcal{G}$ corresponding to filters either collinear or co-circular with $\psi_{p_0}$, while the smaller lateral lobes account for the effect of \emph{parallel} filters. As anticipated, we shall (at least as a first stage) examine only the contribution of the ``principal'' connected component, corresponding to the classical definition of association fields \cite{field}. This is practical in order to compare our model with previous works, although the oscillatory component of the kernel could account for other biologically realistic aspects such as the so-called \emph{ladder effect} \cite{yenfinkel,neuro}. In order to keep only the central lobe, we define the distance function on local patches such that it is truncated where it reaches its maximum. This means ``eliminating'' the periodicity of the cosine in Eq. (\ref{distgabor}) by defining around $(0,0,0)$ a \emph{patch}
 \[\mathcal{P}(0) := \{ p=(x,y,\theta) \: : \: |x(1 + \cos\theta) + y\sin\theta| < \lambda \}.\]
  \begin{figure}[!htbp]
 \centering
 \includegraphics[width=0.98\textwidth]{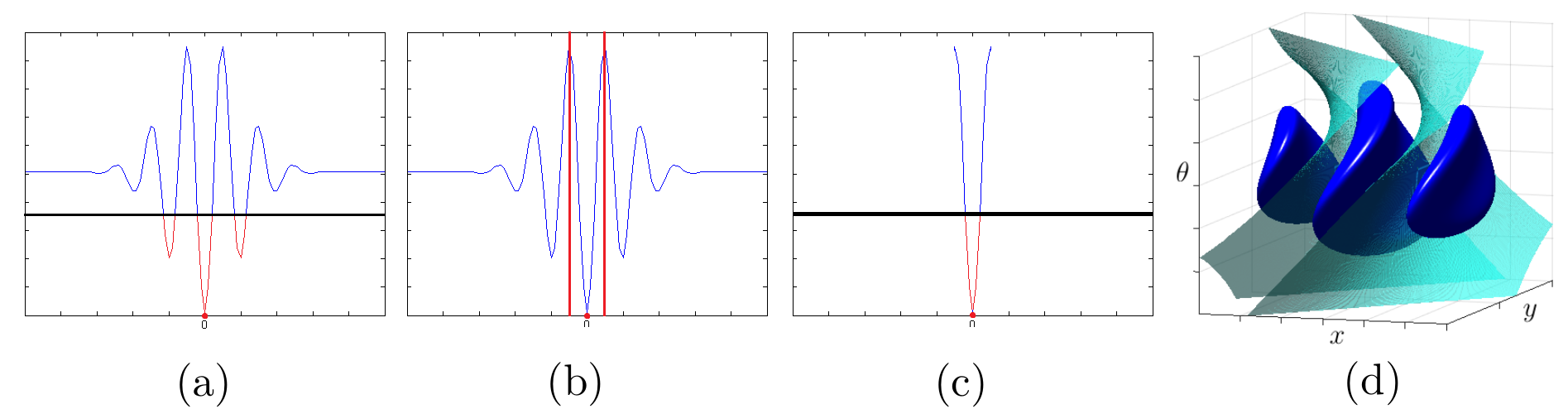}
 \caption{(a) For fixed $y=0$ and $\theta=0$, a plot of $x \mapsto d((x,0,0),(0,0,0))$. In red, the corresponding slice of a neighborhood $B_\varepsilon\big( (0,0,0) \big) = \{(x,y,\theta) \in \mathbb{R}^2\times S^1 \: : \:d((x,y,\theta),(0,0,0))<\varepsilon\}$, which is not connected. (b) We truncate the distance function at its maximum. (b) The neighborhood of the same radius as before, with the truncated distance, turns out to be connected. (d) The non-connected ball $B_\varepsilon\big( (0,0,0) \big)$ (dark blue) displayed in the 3D space $\mathbb{R}^2\times S^1$. The patch $\mathcal{P}(0,0,0)$ is represented by the volume between the two light blue surfaces. After truncating the distance function, only the central lobe remains. In this example we set $\lambda=1$ and $\sigma=1$. \label{dist_patch}}
 \end{figure}
 Of course, the thickness of the patch depends on the frequency of the oscillations of the distance, ruled by the wavelength $\lambda$ of the filters. Figure \ref{dist_patch}(a,b,c) schematically displays this operation on a plot of the distance function with respect to $x$, for fixed values of $y$ and $\theta$. The invariance of $d$ leads to the definition of a patch around each point $p_0=(x_0,y_0,\theta_0) \in \mathbb{R}^2\times S^1$ as follows.
 \[ \mathcal{P}(p_0) := \{ p=(x,y,\theta) \: : \: \big(R_{\theta_0}T_{(x_0,y_0)}(x,y),\: \theta-\theta_0\big) \in \mathcal{P}(0) \}. \]
 The shape of these patches is shown in Figure \ref{dist_patch}d.
 For each $p,p_0 \in \mathbb{R}^2\times S^1$ we then consider the distance $\tilde{d}(p,p_0)$ as defined in (\ref{globdist}).
 Note that those neighborhoods which are ``small enough'' are connected even without truncating the distance function (see Figure \ref{dist_patch}). In other words, there always exists an $\varepsilon>0$ such that $B_\varepsilon(p_0) \subseteq \mathcal{P}(p_0)$.
 This property, together with the symmetry of the patches, makes (\ref{globdist}) a global distance on $\mathbb{R}^2 \times S^1$ (see Proposition \ref{dtilde} and Remark \ref{remsymm}). Moreover, note that a finite sequence $\{q_j\}_{j=0,...,N}$ connecting two points always exists. For $p_0=(0,0,0)$ and $p=(x,y,\theta)$, take for example:
 \[q_0 = (0,0,0) = p_0, \quad q_1 = (0,y,0), \quad q_2 = \left(0,y,\frac{\pi}{2}\right), \quad q_3 = \left(x,y,\frac{\pi}{2}\right) \quad q_4 = (x,y,\theta) = p. \]
 The distance is therefore finite.

\paragraph{\textbf{Local estimate of $d^2$}}\label{gaborlocal}

 Let us study the \emph{local} behavior of the distance function $d$. Fix $p = (x,y,\theta) \in \mathbb{R}^2\times S^1$, and let $x,y,\theta \rightarrow 0$. We have:
 \begin{itemize}
 
 \item $\exp\left( -\frac{x^2}{4\sigma^2} -\frac{y^2}{4\sigma^2} - \frac{2\sigma^2\pi^2(1 - \cos\theta)}{\lambda^2} \right) \approx 1 - \frac{x^2}{4\sigma^2} -\frac{y^2}{4\sigma^2} - \frac{2\sigma^2\pi^2}{\lambda^2} \frac{\theta ^2}{2}$.
 
 \item $\cos\big( \pi \frac{(x(1 + \cos\theta) + y\sin\theta)}{\lambda} \big) \approx 1 - \frac{2\pi^2}{\lambda^2} x^2$.
 
 \end{itemize}
 Then
 \[
  d^2(p,0) \approx 2\sigma^2\pi \left(\left(\frac{1}{4\sigma^2}+\frac{2\pi^2}{\lambda^2}\right)x^2 +\frac{y^2}{4\sigma^2} + \frac{\sigma^2\pi^2}{\lambda^2}\theta^2\right).
 \]
More generally, for $p=(x,y,\theta) \rightarrow (x_0,y_0,\theta_0)=p_0$,
\begin{align*}
 d^2(p,p_0) \approx 2\sigma^2\pi \left(\left(\frac{1}{4\sigma^2}+\frac{2\pi^2}{\lambda^2}\right)a^2 +\frac{1}{4\sigma^2}b^2 + \frac{\sigma^2\pi^2}{\lambda^2}(\theta-\theta_0)^2\right),
\end{align*}
where $(a,b) = R_{\theta_0} T_{(x_0,y_0)}(x,y)$.
 Equivalently,
\begin{align*}
       d^2(p,p_0) \approx (x-x_0,\: y-y_0,\: \theta-\theta_0) \cdotp 
       g(p_0) \cdotp
       \begin{pmatrix} x-x_0 \\ y-y_0 \\ \theta-\theta_0 \end{pmatrix}
      \end{align*}
 where 
\begin{align*}
 g(p_0) = 2\sigma^2\pi \begin{pmatrix} \left(\frac{1}{4\sigma^2}+\frac{2\pi^2}{\lambda^2}\right)\cos^2\theta_0 + \frac{1}{4\sigma^2}\sin^2\theta_0 & \frac{2\pi^2}{\lambda^2}\cos\theta_0\sin\theta_0 & 0 \\
        \frac{2\pi^2}{\lambda^2}\cos\theta_0\sin\theta_0 & \left(\frac{1}{4\sigma^2}+\frac{2\pi^2}{\lambda^2}\right)\sin^2\theta_0 + \frac{1}{4\sigma^2}\cos^2\theta_0 & 0 \\
        0 & 0 & \frac{\sigma^2\pi^2}{\lambda^2}
       \end{pmatrix}.
      \end{align*}
Thus, the distance $d$ is locally equivalent to a Riemannian distance on $\mathbb{R}^2 \times S^1$.\\
Note that, for every point $p_0$,
\[\det g(p_0) = 8\sigma^6\pi^3 \left(\frac{1}{4\sigma^2}+\frac{2\pi^2}{\lambda^2}\right)\frac{1}{4\sigma^2} \frac{\sigma^2\pi^2}{\lambda^2}.\]
This implies that the associated Riemannian measure on $\mathbb{R}^2 \times S^1$ is a constant multiple of the Euclidean measure.

\paragraph{\textbf{Convergence to a sub-Riemannian metric}}\label{gaborconv}

 Finally, we show that the metric $g$ computed above is a Riemannian approximation to a sub-Riemannian structure on $\mathbb{R}^2\times S^1$ which is, up to constants, the same as the one defined in \cite{cs06}. More precisely, we let:
 \begin{enumerate}[(i)]
  \item $\sigma^2 = A\lambda$ for some $A>0$.
  \item $\lambda \longrightarrow 0$.
 \end{enumerate}
 This means that the support of the filters shrinks and the number of oscillations under the Gaussian bell goes to infinity.
 We have, for each $p_0 = (x_0,y_0,\theta_0)$,
 \begin{align*}
  g(p_0) = 2\lambda\pi \begin{pmatrix} \frac{1}{4A\lambda}+\frac{2\pi^2}{\lambda^2}\cos^2\theta_0 & \frac{2\pi^2}{\lambda^2}\cos\theta_0\sin\theta_0 & 0 \\ &&& \\
        \frac{2\pi^2}{\lambda^2}\cos\theta_0\sin\theta_0 &  \frac{1}{4A\lambda}+\frac{2\pi^2}{\lambda^2}\sin^2\theta_0 & 0 \\ &&& \\
        0 & 0 & \frac{A\pi^2}{\lambda}
       \end{pmatrix}.
      \end{align*}
 The inverse metric reads:
 \begin{align*}
  g^{-1}(p_0) = \frac{1}{2\pi\left(\frac{\pi^4}{4} + \lambda\frac{\pi^2}{16A}\right)} \begin{pmatrix} 2A\pi^4\sin^2\theta_0 + \lambda\frac{\pi^2}{4} & -2A\pi^4\sin\theta_0\cos\theta_0 & 0 \\ & & & \\ -2A\pi^4\sin\theta_0\cos\theta_0 & 2A\pi^4\cos^2\theta_0 + \lambda\frac{\pi^2}{4} & 0 \\ & & & \\ 0 & 0 & \frac{\pi^2}{2A} + \lambda\frac{1}{16A^2} \end{pmatrix}.
 \end{align*}
 Now set $\lambda \rightarrow 0$. We have:
 \begin{equation}\label{cometric} g^{-1}(p_0) \xrightarrow[\quad \lambda\rightarrow 0 \quad ] \quad \frac{4A}{\pi} \begin{pmatrix}
                     \sin^2\theta_0 & -\cos\theta_0 \sin\theta_0 & 0 \\
                     -\cos\theta_0 \sin\theta_0 & \cos^2\theta_0 & 0 \\
                     0 & 0 & \frac{1}{4\pi^2 A^2} \end{pmatrix} =: g_0^{-1}(p_0), \end{equation}
 i.e. the metric is the Riemannian approximation to a sub-Riemannian structure on $\mathcal{G} = \mathbb{R}^2\times S^1$. In particular, the matrix $g_0^{-1}(p_0)$ in (\ref{cometric}) is the cometric induced by the vector fields $Y_1$ and $Y_2$ defined in (\ref{vecY}), with the following norm on the horizontal planes:
 \begin{equation}\label{horiznorm} |v|^2 = \frac{\pi}{4A}|Y_1 \cdotp v|^2 +  A\pi^3|Y_2 \cdotp v|^2  \quad \forall v \in H_{x,y,\theta}.\end{equation} 
 For each $(x,y,\theta)$ the horizontal plane $H_{x,y,\theta}$ is the subspace of $T_{x,y,\theta}\mathcal{G}$ generated by $Y_1$ and $Y_2$. Note that $g_0^{-1}$ is just a notation, since this matrix is not invertible.

\subsection{A non-differential example}\label{nondiff}
 
 The example that we are about to introduce is a relevant one since it represents an instance of feature space whose metric cannot be described through a differential structure, thus motivating our work in the more general setting of metric spaces.\\
  Let us consider a surface
  \begin{equation}\label{sigma} \Sigma = \big\{ (x,y,\theta) \in \mathbb{R}^2\times S^1 \; : \; \theta = \Theta(x,y) \big\},\end{equation}
  and the corresponding subset $\{\psi_{x,y,\Theta(x,y)}\}_{x,y}$ of the above-mentioned family of Gabor filters. This yields a feature space $\mathcal{G}\approx\mathbb{R}^2$, endowed with the metric structure defined by this subfamily of filters, i.e.
 \begin{equation}
  d\big((x,y),(x_0,y_0)\big) := \|\psi_{x,y,\Theta(x,y)}-\psi_{x_0,y_0,\Theta(x_0,y_0)}\|_{L^2(\mathbb{R}^2)}.
 \end{equation}
 The restriction to $\Sigma$ of a distance which is locally equivalent to a Riemannian one is still locally equivalent to the induced Riemannian metric on the surface. However, setting $\sigma^2 = A\lambda \rightarrow 0$ as before yields a sub-Riemannian structure on $\mathbb{R}^2 \times S^1$, determined by the vector fields $Y_1$ and $Y_2$ of Eq. (\ref{vecY}). We can consider them to be rescaled so that the norm (\ref{horiznorm}) becomes the Euclidean norm on the horizontal planes. We still denote their commutator by $Y_3$.\\
 We work on the domain $(x,y)$ of the function $\Theta$ defining the surface. Now consider, as in \cite{cittimanfr}, the projections $V:=Y_{1\:\Theta}$ and $W:=Y_{3\:\Theta}$ of the vector fields $Y_1,Y_3$ on the plane $P = \{(x,y,0)\}$:
 \begin{align*}
  V_{(x,y)} = Y_{1\:\Theta\:(x,y)} = - \sin(\Theta(x,y)) \partial_x + \cos(\Theta(x,y)) \partial_y \\
  W_{(x,y)} = Y_{3\:\Theta\:(x,y)} = \cos(\Theta(x,y)) \partial_x + \sin(\Theta(x,y)) \partial_y.
 \end{align*}
 The vector fields $V$ and $W$ span the plane $P$. Note that the surface $\Sigma$ is foliated by integral curves of $V$, and the restriction of the horizontal norm (\ref{horiznorm}) onto this surface would yield a degenerate distance whose balls are segments of curves. \\
 The distance we want to consider on $\Sigma$ is instead the one whose balls are obtained by intersecting $\Sigma$ with the balls of the sub-Riemannian metric on $\mathbb{R}^2\times S^1$ -- i.e. the distance induced on $\Sigma$ as a metric subspace of $\mathbb{R}^2\times S^1$ with the Carnot-Carath\'{e}odory distance. At each point $p_0 \in \mathbb{R}^2 \times S^1$, the exponential mapping $\exp_{p_0} : \mathfrak{g} \rightarrow \mathbb{R}^2 \times S^1$ is defined by $\exp_{p_0}(X) = \gamma_X(1,p_0)$, where $\mathfrak{g}$ is the Lie algebra associated to $\mathbb{R}^2 \times S^1 = SE(2)$ as a Lie group (see \cite{warner}) and $\gamma_X(\cdot,p_0)$ is the unique solution to the Cauchy problem
 \begin{equation*}
  \begin{cases} \partial_t \gamma(t) = X_{|\gamma(t)}\\
   \gamma(0) = p_0.
  \end{cases}
 \end{equation*}
 For sufficiently small $t$, $\exp_{p_0}(tX) = \gamma_{tX}(1,p_0) = \gamma_X(t,p_0)$ is always well defined. Moreover, $\exp_{p_0}$ is a local diffeomorphism \cite{warner}. We can thus define \emph{locally} on $\mathbb{R}^2 \times S^1$ the distance
  \[d_Y^2(p,p_0) = v_1^2 + v_2^2 + |v_3|,\]
  where $v_1,v_2,v_3 \in \mathbb{R}$ are such that $p = \exp_{p_0}\left( \sum_{i=1}^3 v_i Y_i \right)$. This distance is locally equivalent to the Carnot-Carath\'{e}odory distance $d_{cc}$ on $\mathbb{R}^2 \times S^1$. Restricted on the domain of $\Theta$, this becomes
 \begin{equation}
  d^2_\Sigma\big( (x,y),(x_0,y_0) \big) = e_1^2 + |e_2|,
 \end{equation}
 where $(x,y) = \exp_{(x_0,y_0)}\big( e_1 V + e_2 W \big)$ (see \cite{cittimanfr}). Note that the balls of this distance are indeed open sets of the surface.\\ 

 \begin{figure}[htbp!]
  \centering
  \includegraphics[width=0.5\textwidth]{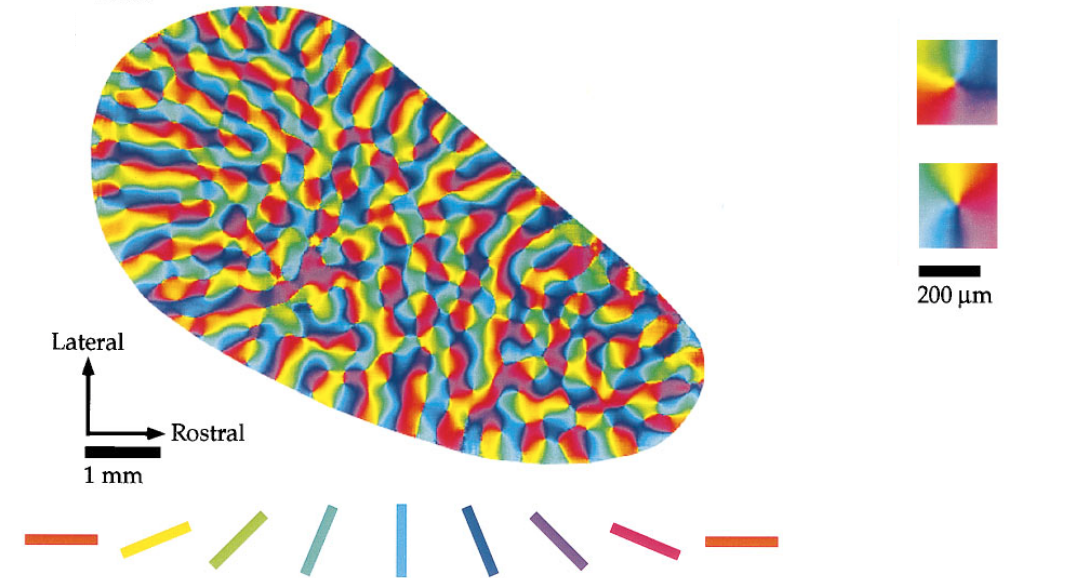}
  \caption{An orientation map $\Theta(x,y)$. The orientation preference measured at each location $(x,y)$ is color-coded. On the right, enlarged portions of the map show pinwheel arrangements, corresponding to hypercolumns. Image modified from \cite{bosking}. \label{pinw}}
 \end{figure}
 Surfaces play a key role in modeling the visual cortex. A first example is given by a surface of maxima such as the one introduced in Section \ref{subriemannian}. Another important instance is represented by the surface defined through an orientation map of V1. These maps, which can be computed through optical imaging techniques (see \cite{bosking}), express the fine-scale mapping of orientation preference of V1 neurons: the visual cortex is indeed two-dimensional and each hypercolumn actually consists of a \emph{pinwheel} configuration such as the ones displayed in Figure \ref{pinw}.

 We shall return to this example in the next Section, whose main subject will be the horizontal connectivity of V1. As already mentioned in Section \ref{subriemannian}, a possible way to represent this connectivity is by means of a diffusion process: in some differential models of V1, this diffusion is expressed through second order operators associated to the sub-Riemannian structure taken into consideration. In order to still be able to use this approach in non-differential cases such as the one described above, we aim at extending it to the context of metric measure spaces.

\section{Connectivity}\label{connectivity}

 A central aspect in modeling the visual cortex is the characterization of how the activity of a neuron is influenced by the surrounding cells.
 In the main existing mathematical cortex models, the feature space (obtained as the set of parameters indexing a family of filters) is equipped with a sub-Riemannian structure (\cite{scp}, \cite{cs06}). Starting from this local geometry, the idea is that of describing the spreading of horizontal connections around each neuron through a propagation equation (e.g. the sub-Riemannian heat equation or the Fokker-Planck equation) associated to the geometry of the space. Such constructions inspired us to give an analogous description of the lateral connectivity through a suitable concept of diffusion linked to the geometric structure of our space.\\
 In our model, the feature space is equipped with a metric space structure defined by the receptive profiles themselves. Starting from a general family of filters, we cannot expect the distance obtained to be compatible with some differential structure. We shall then address the issue in the much more general setting of \textit{metric measure spaces}, following the classical approach of Sturm (\cite{sturm95b},\cite{sturm98}) in defining Dirichlet forms and diffusion processes in this context: if one can define a measure satisfying a certain condition of compatibility with the distance, then it is possible to obtain well-behaved extensions of the Laplacian operator and of its heat kernel. The latter can also be shown to admit Gaussian estimates in terms of the distance.

 \subsection{Diffusion processes on metric measure spaces}\label{sturm}

 In this section we shall briefly summarize the content of \cite{sturm98}, in which Sturm provided a general method to construct a diffusion process on a metric measure space, and proved some properties under a crucial assumption, called the \textit{Measure Contraction Property}.

 Let $(X,d,\mu)$ be a metric measure space, such that $(X,d)$ is a locally compact separable metric space and $\mu$ is a Radon measure on $X$, strictly positive on nonempty open sets. One can then construct a Dirichlet form $E$ on $L^2(X,\mu)$ as the $\Gamma$-limit of a sequence of forms, defined in analogy with the Dirichlet form
 \begin{equation}
 u \longmapsto \frac{1}{2} \int_X |\nabla u |^2 d\mu,
 \end{equation}
 whose associated elliptic operator is the Laplace-Beltrami operator, in the Riemannian case. More precisely, one defines 
 \begin{equation}
 E^r(u) = \frac{1}{2} \int_X \mathcal{N}(x) \int_{B_r(x)\smallsetminus \{x\}} \left(\frac{u(z)-u(x)}{d(z,x)}\right)^2 \frac{d\mu(z)}{\sqrt{\mu(B_r(z))}} \frac{d\mu(x)}{\sqrt{\mu(B_r(x))}},
 \end{equation}
where $\mathcal{N}$ is a normalization function, and lets $E = \Gamma$-$\lim_{r\rightarrow 0} E^r$. This does always exist, provided that $(X,d,\mu)$ satisfies the following property.
\begin{defi}
A metric measure space $(X,d,\mu)$ satisfies the \textit{(weak) Measure Contraction Property} (MCP) \textit{with exceptional set} if there exists a closed set $Z \subseteq X$  with $\mu(Z) = 0$ such that for every compact set $Y \subseteq X \smallsetminus Z$ there are numbers $R>0, \Theta<\infty$ and $\vartheta<\infty$, and $\mu^2$-measurable maps $\Phi_t : X \times X \rightarrow X$ (for all $t \in [0,1]$), with the following properties.
\begin{enumerate}[(i)]
\item for $\mu$-a.e. $x,y \in Y$ with $d(x,y)<R$, and for all $s,t \in [0,1]$,
\begin{equation} \Phi_0(x,y) = x, \quad \Phi_t(x,y) = \Phi_{1-t}(y,x), \quad \Phi_s(x,\Phi_t(x,y)) = \Phi_{st}(x,y),\end{equation}
\begin{equation}\label{geo} d(\Phi_s(x,y), \Phi_t(x,y)) \leq \vartheta |s-t| d(x,y).\end{equation}
\item Define, for $r<0$, the measures $ d\mu_r(x) = \frac{d\mu(x)}{\sqrt{\mu(B_{r}(x))}}$.
Then, for all $r<R$, $\mu$-a.e. $x \in Y$, all $\mu$-measurable $A \subseteq B_r(x)\cap Y$ and all $t \in [0,1]$,
\begin{equation}\label{mcp} \frac{\mu_r(A)}{\sqrt{\mu(B_r(x))}} \leq \Theta \: \frac{\mu_{rt}(\Phi_t(x,A))}{\sqrt{\mu(B_{rt}(x))}}. \end{equation}
\end{enumerate}
The space $(X,d,\mu)$ is said to verify the \textit{strong MCP} if:
\begin{itemize}
 \item the constants $\Theta$ and $\vartheta$ can be taken arbitrarily close to 1;
 \item for every $\Theta'>1$ there exists some $\vartheta'>1$ such that, for $\mu$-a.e. $x \in Y$ and for all $r<R$ with $B_r(x) \subseteq Y$,
 $$ \mu(B_{r\vartheta'}(x)) \leq \Theta' \mu(B_r(x)). $$
\end{itemize}
In this case, there is no restriction in taking always $\Theta=\Theta'$ and $\vartheta<\vartheta'$. \\
For both the weak and strong MCP, one says \textit{without exceptional set} if $Z = \emptyset$.
\end{defi}
\begin{rem}
 For fixed $x$ and $y$, the map $\Phi_\cdotp (x,y) : [0,1] \rightarrow X, \; \; t \mapsto \Phi_t (x,y)$ is a \textit{quasi-geodesic} joining $x$ and $y$. Moreover, if $(X,d)$ is a geodesic space such that geodesics joining $x$ and $y$ can be chosen in such a way that they depend in a measurable way on $x$ and $y$, property (ii) simplifies to
\[ \frac{\mu(A)}{\mu(B_r(x))} \leq \Theta \: \frac{\mu(\Phi_t(x,A))}{\mu(B_{rt}(x))}. \]
\end{rem}
\begin{ex}
 Let $(X, g)$ be a Riemannian manifold. If $d$ is the Riemannian distance and $\mu$ is the Riemannian volume on X, $(X,d,\mu)$ is a metric measure space satisfying the initial requests on $d$ and $\mu$. Furthermore, for such a space the strong MCP without exceptional set is verified. This example is central in our setting, since the basic case which we have in mind as a prototype is the cortical metric space associated to Gabor filters, whose distance is locally equivalent to a Riemannian distance.\\
 More examples are given by manifolds with corners or by gluing together of manifolds not necessarily of the same dimension.
\end{ex}

The MCP implies some important facts, among which the volume doubling property. Moreover, for each $u \in C_0^{Lip}(X)$, the $\Gamma$-limit and the point-wise limit of $E^r(u)$ exist and coincide. Such a limit defines a strongly local, regular Dirichlet form, whose associated intrinsic metric is locally equivalent to the original distance $d$. A Poincar\'e inequality is shown to hold as well. Finally, the corresponding positive self-adjoint operator $A$ has a H\"{o}lder continuous heat kernel $h_t$ (see Theorem 7.4 in \cite{sturm98}):
 \begin{theo}
There exists a measurable function 
\begin{equation}
 H \; : \; ]0,\infty[ \times X \times X \longrightarrow [0,\infty], \quad (t,x,y) \longmapsto H(t,x,y) \equiv h_t(x,y)
\end{equation}
with the following properties.
\begin{enumerate}[(i)]
 \item For every $t>0$, every $u \in L^2(X,\mu)$ and $\mu$-a.e. $x \in X$,
 \begin{equation}\label{expA} e^{-At} u (x) = \int_X h_t(x,y) u(y) d\mu(y).\end{equation}
 \item The function $H$ is locally H\"{o}lder continuous on $]0,\infty[ \times (X\smallsetminus Z) \times (X\smallsetminus Z)$ and identically zero on its complement in $]0,\infty[ \times X \times X$.
 \item For all $s,t>0$ and all $x,y \in X$,
 \begin{equation} 
  h_t(x,y) = h_t(y,x) \quad \text{ and } \quad  h_{t+s}(x,y) = \int_x h_s(x,z)h_t(z,y)d\mu(z).
 \end{equation}
\end{enumerate}
The function $H$ is defined point-wise uniquely by these properties and is called \emph{heat kernel} for $A$.
\end{theo}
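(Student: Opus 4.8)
The plan is to construct the kernel from the semigroup $T_t := e^{-At}$ generated by the self-adjoint operator $A$ associated to the Dirichlet form $E = \Gamma\text{-}\lim_{r\to 0} E^r$, and then to read off each property from the structure theory that the MCP makes available. Since $E$ is a Dirichlet form, $(T_t)_{t>0}$ is a strongly continuous, symmetric, sub-Markovian semigroup on $L^2(X,\mu)$; in particular each $T_t$ is self-adjoint and is a contraction on every $L^p(X,\mu)$. The task is thus to show that $T_t$ acts by integration against a kernel $h_t$ with the stated regularity, and that this kernel is pointwise unique.

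First I would establish the essential analytic input: \emph{ultracontractivity}, i.e. that each $T_t$ maps $L^1(X,\mu)$ boundedly into $L^\infty(X,\mu)$, with a quantitative estimate governed by the volume growth of balls. As recalled in the excerpt, the MCP guarantees both the volume doubling property and a Poincar\'e inequality for $E$. By the equivalence theory of Grigor'yan, Saloff-Coste and Sturm, doubling together with a scale-invariant Poincar\'e inequality is equivalent to the \emph{parabolic Harnack inequality}, which delivers precisely such ultracontractive bounds as well as their matching off-diagonal Gaussian form. This is the step I expect to be the main obstacle, since it is where the geometry of the space—encoded through the contraction maps $\Phi_t$ and the reference measure $\mu$—enters decisively; everything afterward is comparatively soft functional analysis.

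Given ultracontractivity, the existence of a bounded measurable kernel $h_t(x,y)$ representing $T_t$ as in (i) follows from the Dunford--Pettis theorem: a bounded operator $L^1 \to L^\infty$ is an integral operator with essentially bounded kernel. The symmetry $h_t(x,y) = h_t(y,x)$ in (iii) is a restatement of the self-adjointness of $T_t$, and the Chapman--Kolmogorov identity $h_{t+s}(x,y) = \int_X h_s(x,z) h_t(z,y)\, d\mu(z)$ is the kernel form of the semigroup law $T_{t+s} = T_s T_t$. For the local H\"older continuity in (ii), I would invoke the full parabolic Harnack inequality: by De Giorgi--Nash--Moser iteration, solutions of the associated parabolic equation $\partial_t u + A u = 0$ are locally H\"older continuous on $]0,\infty[\times(X\smallsetminus Z)$, and since $(t,x)\mapsto h_t(x,y)$ solves this equation off the exceptional set, it admits a jointly H\"older continuous representative there. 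On the complement—where $Z$ carries the degeneracy of the structure and $E$ does not see the excluded directions—the kernel is set identically to zero.

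Finally, the H\"older continuous representative on $]0,\infty[\times(X\smallsetminus Z)^2$ is unique, and together with the prescription on the complement this pins down $H$ pointwise everywhere, giving the asserted uniqueness. The whole argument is exactly Sturm's, and under the hypotheses assembled above it requires no ingredient beyond the heat-kernel machinery that the MCP supplies.
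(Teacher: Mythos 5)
This statement is quoted background rather than a result proved in the paper: it is Theorem 7.4 of \cite{sturm98}, cited verbatim, so the paper contains no proof of it to compare against. Your sketch correctly reconstructs Sturm's own argument---the MCP yields volume doubling and a Poincar\'e inequality, hence the parabolic Harnack inequality, from which ultracontractivity and local H\"older regularity follow, with the kernel produced via Dunford--Pettis, its symmetry and the Chapman--Kolmogorov identity read off from self-adjointness and the semigroup law, and pointwise uniqueness from continuity together with the full support of $\mu$ and $\mu(Z)=0$---so your route is essentially the same as that of the cited source.
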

Furthermore, this heat kernel admits upper and lower Gaussian estimates. More precisely (see Theorems 7.7 and 7.9 of \cite{sturm98}), we have the following result.
\begin{theo}\label{heatgauss1}
Let $(X,d,\mu)$ verify the strong MCP with some exceptional set, and let $Z$ be the exceptional set for the weak MCP. Then, for every compact $Y \subseteq X \smallsetminus Z$ and every $\varepsilon>0$, there exists a constant $C$ such that
\begin{align*}
\frac{1}{C \: \mu(B_{\sqrt{t}\wedge R}(x))} \exp\left( -C \frac{d^2(x,y)}{2t} \right) \exp \left( - \frac{C t}{R^2} \right) \leq h_t(x,y) \\ \leq \frac{C}{\mu(B_{\sqrt{t_0}}(x))} \exp\left( -\frac{d^2(x,y)}{(2+\varepsilon)t} \right) \exp\left( -(1+\varepsilon)\Lambda t\right),
\end{align*}
for each $x,y$ which are joined by an arc $\gamma$ in $Y$ of arc length $d(x,y)$. Here $R = d(\gamma, X \smallsetminus Y)$, $t_0 = \inf\{t, d^2(x,X\smallsetminus Y), d^2(y,X\smallsetminus Y)\}$ and $\Lambda$ is the bottom of the spectrum of the operator $A$ on $L^2(X,\mu)$. 
\end{theo}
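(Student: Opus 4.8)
Since this statement reproduces Theorems 7.7 and 7.9 of \cite{sturm98}, the proof is Sturm's; what I would do is reconstruct it from the two structural consequences of the strong MCP recorded above. The plan is to reduce the whole assertion to the volume doubling property and the scale-invariant Poincar\'e inequality, both of which hold locally on each compact $Y \subseteq X \smallsetminus Z$ with constants depending on $Y$, and then to feed these into the now-classical equivalence (Grigor'yan; Saloff-Coste) between the pair volume doubling plus Poincar\'e and two-sided Gaussian heat kernel bounds. Because the MCP is only available on compacts away from the exceptional set, every step must be localized to a neighborhood of the arc $\gamma$ joining $x$ and $y$, and it is precisely this localization that produces the boundary-effect corrections governed by $R = d(\gamma, X\smallsetminus Y)$ and by $t_0$.

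For the upper estimate I would first obtain the on-diagonal bound $h_t(x,x) \leq C/\mu(B_{\sqrt t}(x))$ from a Faber--Krahn (equivalently Nash-type) inequality, a standard consequence of volume doubling together with the Poincar\'e inequality. I would then upgrade this to the off-diagonal Gaussian factor $\exp(-d^2(x,y)/((2+\varepsilon)t))$ by Grigor'yan's integrated maximum principle (the Davies--Gaffney method): one tests the semigroup against a weight $\exp(\xi(z))$ with $\xi$ Lipschitz of slope tied to $d(x,y)/t$, which is meaningful here because the intrinsic metric of the Dirichlet form is locally equivalent to $d$. The spectral factor $\exp(-(1+\varepsilon)\Lambda t)$ is extracted by splitting off the bottom $\Lambda$ of the spectrum of $A$, i.e. by applying the argument to the renormalized semigroup $e^{(\Lambda-\delta)t}e^{-At}$.

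For the lower estimate the engine is the parabolic Harnack inequality, which follows from volume doubling and the Poincar\'e inequality by Moser iteration. Combined with conservation of mass and the H\"{o}lder continuity of $H$ from the preceding theorem, Harnack yields a near-diagonal lower bound $h_t(x,y) \geq c/\mu(B_{\sqrt t}(x))$ valid for $d(x,y) \lesssim \sqrt t$. I would then propagate this to arbitrary $x,y$ by a chaining argument: cover the arc $\gamma$ by a controlled number $N$ of points at mutual distance of order $\sqrt{t/N}$, apply the near-diagonal bound and the semigroup (Chapman--Kolmogorov) identity of the preceding theorem across consecutive balls, and use volume doubling to compare the intermediate volume factors; optimizing in $N$ reproduces the Gaussian exponent $\exp(-Cd^2(x,y)/(2t))$.

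The main obstacle is exactly this localization. The MCP constants $\Theta, \vartheta, R$ depend on the compact $Y$, so volume doubling, the Poincar\'e inequality, the Harnack inequality and the chaining are all only available inside $Y$; keeping the chain of balls within $Y$ forces the correction $\exp(-Ct/R^2)$ in the lower bound and the replacement of $\sqrt t$ by $\sqrt{t_0}$ in the volume normalization, and one must separately control the behavior near the exceptional set $Z$, where $H$ vanishes and the estimates degenerate. Tracking how the local constants aggregate along the chain while respecting these boundary effects is the technical heart of Sturm's argument.
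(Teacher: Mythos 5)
The paper offers no proof of this statement: it is quoted verbatim as background material, with an explicit pointer to Theorems 7.7 and 7.9 of \cite{sturm98}, so there is no in-paper argument to compare yours against. Your reconstruction --- strong MCP $\Rightarrow$ local volume doubling and scale-invariant Poincar\'e inequality $\Rightarrow$ parabolic Harnack inequality (Moser iteration) and Davies-type integrated maximum principle $\Rightarrow$ two-sided Gaussian bounds, with the localization to the compact $Y$ producing the corrections governed by $R$ and $t_0$ --- is precisely the route Sturm himself follows (leaning on his earlier papers on local Dirichlet spaces), so your proposal is a faithful outline of the proof that the paper implicitly defers to.
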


\subsection{The cortical metric measure space}\label{hausdorff}

 Let us recall our setting: we have a metric space $(\mathcal{G},d)$, where $\mathcal{G}$ is the feature space indexing a family $\{\psi_p\}_{p \in \mathcal{G}}$ of linear filters on the plane, and $d$ is the distance function of Definition \ref{defdist}.\\
 The first step in order to be able to do some analysis on $(\mathcal{G},d)$ is to equip it with a suitable measure. This has to be related to some notion of \textit{density} of the filters with respect to the distance $d$. Moreover, the MCP of \cite{sturm98} expresses a link between the metric balls and the measure. Therefore, a quite natural choice is the spherical Hausdorff measure (see \cite{hausd}, \cite{yeh}, \cite{federer}) associated to the distance $d$. We shall denote it by $\mu$. Suppose now that $(\mathcal{G},d)$ is a locally compact separable metric space, that $\mu$ is a Radon measure with full support on $X$ and that the metric measure space $(X,d,\mu)$ satisfies the MCP. This yields Gaussian estimates for the heat kernel $h_t$ associated to the diffusion process defined by the Dirichlet form $E$, meaning that in this case one has an approximate version of $h_t$ expressed \textit{explicitly} in terms of the cortical distance $d$.

 The first comment to be made is about the nice behavior of the spherical Hausdorff measure and of the MCP in the event of equivalent (or locally equivalent) distances. Indeed, if $d$ and $d'$ are two distances defined on $X$ with corresponding spherical Hausdorff measures $\mu$ and $\mu'$, then we have \cite{federer}:
 \begin{align} \exists \kappa>0 \; : \; & \kappa^{-1} d(x,y)  \leq d'(x,y) \leq \kappa \: d(x,y) \quad \forall x,y \in X \label{disteq}\\ \Longrightarrow & \quad \kappa^{-s} \mu(A) \leq \mu'(A) \leq \kappa^s \: \mu(A) \quad \text{ for any Borel set } A \subseteq X, \label{measeq}\end{align}
 where $s$ is the Hausdorff dimension.
 The same holds locally if the distances are only locally equivalent. 
 Suppose now that the MCP is verified for $(X,d, \mu)$. The exceptional set $Z$ is obviously still a null set with respect to $\mu'$. Fix a compact set $Y \subseteq X$. Note that, even if the equivalence is just local, the compactness of $Y$ allows to have (\ref{disteq}) with the same $\kappa$ over all $Y$. The maps $\Phi_t$ are still measurable and verify the properties (i). In particular (\ref{geo}) holds thanks to the equivalence of the distances. Recall that the doubling property holds for $d,\mu$ and let $M$ be a doubling constant:
 \[ \mu(B_{\kappa r}(x)) \leq M \mu(B_r (x)). \]
 Now, (\ref{measeq}) implies:
 \[ (\kappa^s M)^{-1} \mu(B_r(x)) \leq \mu'(B'_r(x)) \leq \kappa^s M \: \mu(B_r(x)) \quad \text{and} \quad 
 (\kappa^{\frac{3s}{2}} M )^{-1} \mu_r(A) \leq \mu'_r(A) \leq \kappa^{\frac{3s}{2}} M \: \mu_r(A).\]
 Finally, by these inequalities and the property (\ref{mcp}) for $d$ and $\mu$, we have:
 \begin{align*}
 \frac{\mu'_r(A)}{ \sqrt{\mu'(B'_r(x))}} \leq \frac{\kappa^{2s} M^\frac{3}{2} \: \mu_r(A)}{\sqrt{\mu(B_r(x))}} \leq \frac{\Theta \kappa^{2s} M^\frac{3}{2} \:\mu_{rt}(\Phi_t(x,A))}{\sqrt{\mu(B_{rt}(x))}} \leq \frac{\Theta \kappa^{4s} M^3 \mu'_{rt}(\Phi_t(x,A))}{\sqrt{\mu'(B'_{rt}(x))}},
 \end{align*}
 i.e.
 \[ \frac{\mu'_r(A)}{\sqrt{\mu'(B'_r(x))}} \leq C \: \frac{\mu'_{rt}(\Phi_t(x,A))}{\sqrt{\mu'(B'_{rt}(x))}}.\]
 To sum up, 
 \begin{propo} The weak MCP for the spherical Hausdorff measure is invariant under local equivalence of distances.\end{propo}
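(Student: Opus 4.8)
The plan is to verify, clause by clause, that the weak MCP for $(X,d,\mu)$ forces the same property for $(X,d',\mu')$, reusing the \emph{same} exceptional set $Z$ and the \emph{same} quasi-geodesic maps $\Phi_t$. The only new inputs required are the Federer comparison of spherical Hausdorff measures recorded in (\ref{disteq})--(\ref{measeq}) and the volume doubling property, which the MCP already guarantees for $(X,d,\mu)$ with some constant $M$. The strategy is thus to show that each metric- or measure-dependent ingredient of the definition survives the passage to $d'$, $\mu'$ with at worst a change of the numerical constants, uniformly over a fixed compact set.

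Several clauses transfer for essentially formal reasons. Since (\ref{measeq}) renders $\mu$ and $\mu'$ mutually absolutely continuous, the null set $Z$ for $\mu$ is also $\mu'$-null. Given a compact $Y \subseteq X \smallsetminus Z$, compactness upgrades the merely local equivalence of $d$ and $d'$ to a single constant $\kappa$ valid throughout $Y$, so that (\ref{disteq}) holds with one $\kappa$. The algebraic identities of property (i) --- namely $\Phi_0(x,y)=x$, the time-reversal, and the reparametrization rule --- make no reference to the metric and hence persist verbatim; the quasi-geodesic bound (\ref{geo}) for $d'$ follows from the one for $d$ at the cost of replacing $\vartheta$ by $\kappa^2\vartheta$, again within the permitted constants.

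The substance is then the measure-contraction inequality (\ref{mcp}). First I would sandwich the $d'$-balls between $d$-balls, $B_{r/\kappa}(x) \subseteq B'_r(x) \subseteq B_{\kappa r}(x)$, which is immediate from (\ref{disteq}). Combining these inclusions with (\ref{measeq}) and one application of the doubling constant $M$ yields two-sided comparisons $\mu'(B'_r(x)) \asymp \mu(B_r(x))$ and, after tracking the weight $1/\sqrt{\mu(B_r(\cdot))}$ that defines the auxiliary measures, $\mu'_r(A) \asymp \mu_r(A)$, with explicit constants of the form $\kappa^s M$ and $\kappa^{3s/2}M$. Substituting these into (\ref{mcp}) for $(X,d,\mu)$ and collecting factors produces (\ref{mcp}) for $(X,d',\mu')$ with a single new constant of the form $C = \Theta\,\kappa^{4s}M^3$, uniform over $Y$.

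I expect the only delicate point to be the bookkeeping around the weighted measures $\mu_r$, since the weight $1/\sqrt{\mu(B_r(x))}$ is itself metric-dependent and sits inside the integral over $A$: the ball-sandwich must be applied to the integrand pointwise and merged with the doubling estimate so as to yield one multiplicative constant independent of $A$, $x$, $r$ and $t$. Once this is done, all constants remain finite and uniform on the fixed compact $Y$, which is exactly what the definition of the weak MCP demands.
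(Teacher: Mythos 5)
Your proposal is correct and follows essentially the same route as the paper's own proof: same exceptional set $Z$ and maps $\Phi_t$, compactness to get a uniform equivalence constant $\kappa$ on $Y$, the Federer comparison (\ref{measeq}) together with the doubling constant $M$ to obtain the two-sided estimates $\mu'(B'_r(x)) \asymp \mu(B_r(x))$ and $\mu'_r(A) \asymp \mu_r(A)$, and then substitution into (\ref{mcp}), arriving at the same final constant $\Theta\,\kappa^{4s}M^3$. The ``delicate point'' you flag about the weight $1/\sqrt{\mu(B_r(\cdot))}$ inside $\mu_r$ is handled in the paper exactly as you propose, so no gap remains.
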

 In fact, if the equivalence constant $\kappa$ of the two distances can be locally chosen to be arbitrarily close to 1, then even the strong MCP is preserved. \\
 These facts are of particular importance for our purposes, since we have seen that the cortical distance arising from a set of Gabor filters turns out to be locally equivalent to a Riemannian distance on $\mathbb{R}^2\times S^1$, with a local equivalence constant approaching 1. Recall that all Riemannian manifolds $(M,g)$ are locally compact as metric spaces with the geodesic distance $d_g$ \cite{abb}, and that this property is preserved in metric spaces under local equivalence of distances. Moreover, the MCP holds \cite{sturm98} on $(M,d_g,\mu_g)$ where $\mu_g$ is the Riemannian measure, which coincides with the spherical Hausdorff measure associated to $d_g$. This immediately leads to the following result.
 \begin{theo} The cortical metric measure space $(\mathbb{R}^2\times S^1,d,\mu)$ defined by the bank of Gabor filters (\ref{mother}) satisfies the MCP. \end{theo}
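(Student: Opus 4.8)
The plan is to obtain the statement as a short corollary of the structural results already assembled, rather than to verify the Measure Contraction Property directly from its definition for the Gabor distance. The strategy is a transfer argument: one knows the MCP for the Riemannian model and then carries it over to the cortical distance using local equivalence of the two distances together with the invariance of the spherical Hausdorff measure.

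First I would invoke the local estimate of $d^2$ computed above, which shows that the Gabor distance $d$ is locally equivalent to the Riemannian distance $d_g$ associated to the metric $g(p_0)$, on all of $\mathbb{R}^2\times S^1$. Crucially, because $d^2(p,p_0)$ agrees with the quadratic form $g(p_0)$ to leading order as $p\to p_0$, the equivalence constant $\kappa$ can be taken arbitrarily close to $1$ on sufficiently small neighbourhoods; this is exactly the hypothesis under which not merely the weak but the strong MCP is preserved.

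Next I would appeal to Sturm's result, recorded in the Example above, that any Riemannian manifold $(X,g)$ endowed with its geodesic distance $d_g$ and Riemannian volume $\mu_g$ satisfies the strong MCP without exceptional set. Applying this to $(\mathbb{R}^2\times S^1, d_g, \mu_g)$ yields the MCP for the Riemannian model. I would then use the fact that $\mu_g$ coincides with the spherical Hausdorff measure of $d_g$, so that the invariance Proposition applies verbatim: since $d$ and $d_g$ are locally equivalent, the associated spherical Hausdorff measures $\mu$ and $\mu_g$ are locally equivalent as well, and the (strong) MCP transfers from $(\mathbb{R}^2\times S^1, d_g, \mu_g)$ to $(\mathbb{R}^2\times S^1, d, \mu)$.

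It remains only to check the standing regularity hypotheses on the triple $(\mathbb{R}^2\times S^1, d, \mu)$, namely local compactness and separability of the metric space and the fact that $\mu$ is a Radon measure of full support. These all descend from the Riemannian side: local compactness of a Riemannian manifold in its geodesic distance is preserved under local equivalence of distances, separability is immediate for a manifold, and the measure statements follow from the equivalence $\mu\asymp\mu_g$ together with the corresponding properties of the Riemannian volume. I do not expect any genuine obstacle here; the only point requiring care is the control of the equivalence constant $\kappa\to 1$, which is what upgrades the conclusion from the bare (weak) MCP---already sufficient for the statement---to the strong MCP inherited from the Riemannian example.
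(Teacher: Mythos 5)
Your proposal is correct and follows essentially the same route as the paper: local equivalence of the Gabor distance to a Riemannian distance with equivalence constant approaching $1$, Sturm's (strong) MCP for Riemannian manifolds with the identification of the Riemannian volume and the spherical Hausdorff measure of $d_g$, and then the invariance Proposition to transfer the MCP to $(\mathbb{R}^2\times S^1,d,\mu)$. The paper's own argument is exactly this chain, including the observations on local compactness and the upgrade to the strong MCP when $\kappa\to 1$, so there is nothing to add.
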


\subsection{The MCP for a sub-Riemannian surface in $\mathbb{R}^2 \times S^1$}

 We now go back to the example, introduced in Section \ref{nondiff}, of a sub-Riemannian surface in $\mathbb{R}^2 \times S^1$. We prove that such a space satisfies the MCP, thus providing an example of a non-differential feature space on which the horizontal connectivity can still be represented through a suitable diffusion process.
 
 Consider a surface $\Sigma$ as in (\ref{sigma}), whose defining function $\Theta(x,y)$ is $C^1$, except possibly for a discrete set $\varPi \subset \mathbb{R}^2$. Denote  $Z := \{(x,y,\Theta(x,y)) : (x,y) \in \varPi\} \subseteq \Sigma$.
 \begin{rem}
 $(\Sigma,d_\Sigma)$ is locally compact. Indeed,
  \begin{enumerate}[(i)]
   \item $\mathbb{R}^2 \times S^1$ with the Carnot-Carath\'{e}odory distance $d_{cc}$ is a locally compact space \cite{abb}. Then each closed subset of $(\Sigma,d_\Sigma)$ away from $Z$ is locally compact because it is a closed subspace of $(\mathbb{R}^2 \times S^1,d_{cc})$ by the continuity of $\Theta$.
   \item Now, given $\zeta \in Z$, we need to construct a compact neighborhood of $\zeta$ in $\Sigma$. Consider a closed ball $\overline{B_\varepsilon^{cc}(\zeta)}$ of $d_{cc}$ in $\mathbb{R}^2 \times S^1$ such that $\overline{B_\varepsilon^{cc}(\zeta)}$ does not contain any other point of $Z$; then define $B := \overline{B_\varepsilon^{cc}(\zeta)} \cap \Sigma$. This is a neighborhood of $\zeta$ in the induced metric $d_\Sigma$. We now prove that $B$ is compact.\\
  Given a sequence $\{p_n\}_n \subseteq B \subseteq \overline{B_\varepsilon^{cc}(\zeta)}$, by the compactness of $\overline{B_\varepsilon^{cc}(\zeta)}$ there exists a subsequence $\{p_{n_k}\}_k$ converging to a point $p \in \overline{B_\varepsilon^{cc}(\zeta)}$. If $\zeta \notin \{p_{n_k}\}_k$, then $p$ belongs to $B$ by (i). If $\zeta \in \{p_{n_k}\}_k$, either $p=\zeta \in B$ or $p_{n_k} \neq \zeta$ for $k>\overline{k}$ and the truncated sequence falls into the preceding case.
  \end{enumerate}
 \end{rem}
 The measure $\mu$ that we consider on $\Sigma$ is the one given by the sub-Riemannian area, since this coincides up to a constant with the spherical Hausdorff measure on $(\Sigma,d_\Sigma)$ (see \cite{galri},\cite{rectif}). Specifically, given a subset $S \subseteq \Sigma$,
 \begin{equation}
  \mu(S) = \int_S |N_h| d\Sigma.
 \end{equation}
 Here, $N_h$ is the orthogonal projection of a unit vector field normal to $\Sigma$ onto the horizontal distribution, and $d\Sigma$ is the Riemannian measure of $\Sigma$ induced by the projected vector fields $V$ and $W$ defined in Section \ref{nondiff}, i.e. 
 \[d\Sigma(x,y) = \sqrt{\det g_\Sigma(x,y)}dxdy.\]
 Now denote $\xi = (x,y)$. We define
 \begin{equation}\Phi_t(\xi,A) = \exp_\xi(t \cdotp \exp_\xi^{-1}(A)),\end{equation}
 where $\cdotp$ denotes the dilation 
  \[ t \cdotp v = \left(te_1,\: t^2 e_2\right) \quad \forall v=(e_1,e_2) \in P.\] 
 Given a compact set $Y$ of $\Sigma$ and $A \subseteq B_r(\xi)\cap Y$, we have:
 \begin{align*}
 \mu(\Phi_t(\xi,A)) & = \int_{\exp_\xi(t \cdotp \exp_\xi^{-1}(A))} d\mu = \int_{\exp_\xi(t \cdotp \exp_\xi^{-1}(A))} |N_h(\xi')| \sqrt{\det(g_\Sigma(\xi'))} d\xi' \\
 & = \int_{t \cdotp \exp_\xi^{-1}(A)} \: \big|J_\xi(v)\big| |N_h(\exp_\xi(v))| \sqrt{\det(g_\Sigma(\exp_\xi(v)))} dv \\
 & = t^3 \int_{\exp_\xi^{-1}(A)} \: \big|J_\xi(tu)\big| |N_h(\exp_\xi(tu))| \sqrt{\det(g_\Sigma(\exp_\xi(tu)))} du \\
 & = t^3 \int_A \frac{\big|J_\xi(t \cdotp \exp_\xi^{-1}(\xi'))\big|}{\big|J_\xi(\exp_\xi^{-1}(\xi'))\big|}|N_h(\Phi_t(\xi,\xi'))|\sqrt{\det(g_\Sigma(\Phi_t(\xi,\xi')))}d\xi',
 \end{align*}
 where $d\xi'= dx'dy'$ and $J_\xi(v)$ denotes the Jacobian determinant of $\exp_\xi$. Then
 \begin{equation}\label{ratio} \frac{\mu(\Phi_t(\xi,A))}{\mu(A)} = \frac{\int_A \big|J_\xi(t \cdotp \exp_\xi^{-1}(\xi'))\big| \: \big|J_\xi(\exp_\xi^{-1}(\xi'))\big|^{-1} f(\Phi_t(\xi,\xi'))d\xi'}{\int_A f(\xi')d\xi'} \; t^3 ,\end{equation}
 where we have denoted $f = |N_h| \sqrt{\det(g_\Sigma)}$. Now, one has (\cite{cittimanfr},\cite{nsw}):
 \[(1 + O(|v|))^{-1} \leq |J_\xi(v)| \leq 1 + O(|v|).\]
 This yields
 \begin{align*} \frac{\big|J_\xi(t \cdotp \exp_\xi^{-1}(\xi'))\big|}{\big|J_\xi(\exp_\xi^{-1}(\xi'))\big|} \geq (1+O(|\exp_\xi^{-1}(\xi')|))^{-1}(1+t^2O(|\exp_\xi^{-1}(\xi')|))^{-1} \geq 1+O(r).\end{align*}
 Therefore the initial calculation leads to
 \[\frac{\mu(\Phi_t(\xi,A))}{\mu(A)} \geq \frac{\int_A f(\Phi_t(\xi,\xi'))d\xi'}{\int_A f(\xi')d\xi'}(1 + O(r))\:t^3. \]
 Finally, $f(\Phi_t(\xi,\xi')) = f(\xi') + O(d_\Sigma(\Phi_t(\xi,\xi'), \xi'))$ and both $\Phi_t(\xi,\xi')$ and $\xi'$ are in $B_r(\xi)$. Then,
 \[\frac{\int_A f(\Phi_t(\xi,\xi'))d\xi'}{\int_A f(\xi')d\xi'} = \frac{\int_A f(\xi')d\xi' + O(r)\int_A d\xi'}{\int_A f(\xi')d\xi'} = 1 + O(r) \quad \Rightarrow \quad \frac{\mu(\Phi_t(\xi,A))}{\mu(A)} \geq (1 + O(r))\:t^3.\]
 On the other hand, since $B_{rt}(\xi) = \Phi_t(\xi,B_r(\xi))$ and since we have estimates for $J_\xi$ and $f$ both from above and from below, we have:
 \[ \frac{\mu(B_{rt}(\xi))}{\mu(B_r(\xi))} \leq (1 + O(r)) \; \frac{\mu(\Phi_t(\xi,A))}{\mu(A)}.\] 
 Note that, in the case of a pinwheel surface (see Section \ref{nondiff}), the \emph{exceptional set} $Z$ of the MCP is represented by the singularities at the center of each pinwheel arrangement.

 \subsection{Propagation through a connectivity kernel}\label{kernel_iter}

 Under the hypothesis that the MCP holds on $(X,d,\mu)$, the heat kernel admits Gaussian estimates. Hence, in analogy with the sub-Riemannian case, we may model the horizontal connectivity with the heat kernel. However, it would not be clear how this is implemented in the visual cortex. On the other side, the kernel $K$ can be locally approximated through an exponentially decaying function of the squared distance.
 \begin{rem}\label{taylor}
  By Taylor expansion, we have: $e^{-\frac{d^2(p,q)}{2}} =  \left(1 - \frac{d^2(p,q)}{2}\right) + o\left( d^2(p,q) \right)$.
  Recall that $d^2(p,q) = 2 (t - K(p,q))$, where $t$ is the squared $L^2$ norm of the filters. We shall now make explicit the dependence of $K$ and $d$ on $t$, i.e.
\[ K_t(p,q) = t - \frac{d_t^2(p,q)}{2}.\]
 If we fix $t$, when $d_t(p,q)$ is small we then get
 \begin{equation}\label{estimate12}
  e^{-\frac{d_t^2(p,q)}{2}} \approx \left(1 - \frac{d_t^2(p,q)}{2}\right) = K_t(p,q).
 \end{equation}
 \end{rem}
 
 In addition, in the special case of a compact Riemannian submanifold of the Euclidean space, an arbitrary good approximation of the heat kernel can be provided by iterating the Gaussian kernel for small $t$.
 In \cite{coiflaf}, an approximating kernel is defined as follows: given an exponentially decaying function $h$ and $\alpha \in \mathbb{R}$,
 \[k_t(p,q) = h\left( \frac{\|p-q\|^2}{t} \right); \quad k_t^{(\alpha)}(p,q) = \frac{k_t(p,q)}{Q_t^\alpha(p)Q_t^\alpha(q)}, \;\text{where } Q_t(p) = \int k_t(p,q)Q(q)d\mu(q).\]
 Here, $Q$ is a density function expressing the distribution of points in a dataset. 
 A new kernel, depending on the choice of $\alpha$, is then defined via a normalization:
 \begin{equation}\label{normalizationCL}
 S^{(\alpha)}[k_t](p,q) = \frac{k_t^{(\alpha)}(p,q)}{\int k_t^{(\alpha)}(p,q')Q(q')d\mu(q')}.
 \end{equation}
 The following theorem is proved (see Proposition 3 in \cite{coiflaf}). 
 \begin{theo}  Define the operators 
 \begin{equation}\label{apprheat} H^{(\alpha)}_t f \: (p) := \int S^{(\alpha)}([k_t](p,q)f(q)d\mu(q)
 \quad \text{and}\quad L_{t,\alpha}f = \frac{1}{t}\Big(f - H^{(\alpha)}_t f\Big)
 \end{equation}
 For $\alpha=1$  and for any fixed $N$, the operator $L_{t,1}$ converges to the Laplace-Beltrami operator onto the linear span of its first $N$ eigenfunctions, and the kernel 
 \[k_{t,n} := \left( H_\frac{t}{n}\right)^{n-1} S^{(\alpha)}[k_\frac{t}{n}]\]
 converges to the Neumann heat kernel on the manifold as $n$ goes to infinity.
 \end{theo}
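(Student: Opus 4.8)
The plan is to derive the whole statement from one pointwise asymptotic expansion for integral operators against the kernel $k_t$, and then to feed this expansion into a spectral argument for the first assertion and a Chernoff-type product-formula argument for the second. I would work on the compact Riemannian submanifold $M$ (here the image of $\mathcal{G}$ under $p \mapsto \psi_p$, which is isometrically embedded in $L^2(\mathbb{R}^2)$ so that $\|p-q\| = d(p,q)$), write $\Delta_M$ for its Laplace--Beltrami operator and $D = \dim M$, and exploit the exponential decay of $h$, which confines $k_t(p,\cdot)$ to a geodesic ball of radius $O(\sqrt t)$ about $p$.

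The key lemma to establish first is that, for $f \in C^\infty(M)$ and $p$ in the interior,
\[ \int_M k_t(p,q) f(q)\, d\mu(q) = t^{D/2}\Big( m_0 f(p) + t\, m_2 \big( \omega(p) f(p) + \tfrac{1}{2}\Delta_M f(p) \big) \Big) + O\big(t^{D/2+2}\big), \]
where $m_0, m_2$ are Gaussian moments of $h$ and $\omega$ gathers the curvature terms coming from the volume element. This is proved by localizing with the decay of $h$, passing to geodesic normal coordinates at $p$, Taylor-expanding $f$ and the Jacobian of the exponential map (the Ricci curvature entering at order $t$), and computing the resulting Gaussian moments. Applied to $fQ$ in place of $f$, the lemma gives the leading behaviour $Q_t(p) = t^{D/2} m_0 Q(p) + O(t^{D/2+1})$ of the density estimate that is divided out in the $\alpha$-normalization.

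With the lemma in hand, the role of $\alpha$ becomes transparent: after forming $k_t^{(\alpha)}(p,q) = k_t(p,q)/(Q_t^\alpha(p) Q_t^\alpha(q))$ and applying the lemma a second time to the $S^{(\alpha)}$-normalized operator, the density-dependent corrections appear with a factor proportional to $(1-\alpha)$, so that the choice $\alpha = 1$ cancels them exactly and leaves
\[ L_{t,1} f = \frac{1}{t}\big(f - H_t^{(1)} f\big) \longrightarrow c\, \Delta_M f \qquad (t \to 0), \]
uniformly on $C^\infty(M)$, with $c = m_2/m_0$ an explicit constant absorbed by rescaling time. The first assertion then follows by a standard perturbation argument: the $L_{t,1}$ are self-adjoint and converge to $\Delta_M$ on a core, so their low-lying spectral projections converge, and compressing to the finite-dimensional span of the first $N$ eigenfunctions upgrades the uniform convergence on $C^\infty(M)$ to norm convergence of the restricted operators.

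For the second assertion I would invoke a Chernoff (Trotter) product formula. From $H_s^{(1)} = I - s L_{s,1}$ with $L_{s,1} \to c\,\Delta_M$, the family $s \mapsto H_s^{(1)}$ is a Chernoff-admissible approximation of the heat semigroup generator, so the iterates $(H_{t/n}^{(1)})^n$ converge strongly to $e^{tc\Delta_M}$ as $n \to \infty$; since $k_{t,n} = (H_{t/n})^{n-1} S^{(1)}[k_{t/n}]$ is exactly the integral kernel of $(H_{t/n}^{(1)})^n$, its limit is the kernel of the heat semigroup. The one genuinely delicate point, and the main obstacle, is the behaviour near $\partial M$: there the symmetric localization of $k_t$ is truncated by the boundary, which destroys the interior expansion at order $\sqrt t$ and requires a separate boundary-layer analysis. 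Carrying this out shows that the truncation enforces a no-flux condition, which is precisely why the limiting object is the \emph{Neumann} heat kernel rather than that of a closed manifold; making the expansion and the product formula uniform up to $\partial M$ is where most of the technical effort goes.
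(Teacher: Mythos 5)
The paper does not prove this theorem itself — it quotes it from Coifman and Lafon (Proposition 3 of \cite{coiflaf}) — and your proposal reconstructs essentially the argument of that source: the localized asymptotic expansion in geodesic normal coordinates with the curvature/volume correction $\omega$, the observation that density-dependent terms enter with a factor proportional to $(1-\alpha)$ so that $\alpha=1$ leaves exactly the Laplace--Beltrami operator, a Chernoff-type product formula turning $H_{t/n}^{(1)} = I - \frac{t}{n}L_{t/n,1}$ into convergence of the iterates to the heat semigroup, and a separate boundary-layer analysis which is indeed where the Neumann condition comes from. One caveat: $H_t^{(1)}$ is \emph{not} self-adjoint as you claim — the row normalization in $S^{(\alpha)}$ destroys the symmetry of the kernel — but it is conjugate to a symmetric operator (divide by the square root of the normalizing factor), which is precisely how \cite{coiflaf} obtains the real spectral decomposition that your perturbation and product-formula steps require; with that standard correction your outline goes through.
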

 As a matter of fact, the result proved in \cite{coiflaf} is more general. For each value of $\alpha$, the generator converges to a specific operator (see Theorem 2 in \cite{coiflaf}). In particular, an interesting fact is that, for $\alpha=\frac{1}{2}$, the process approximates the diffusion of a Fokker-Planck equation depending on the density function $Q$. This result implies that different normalizations of the same Gaussian kernel may be used to define a generalization of other diffusion processes proposed in differential cortex models. As already mentioned in Section \ref{subriemannian}, the Fokker-Planck equation has been taken into consideration in various works to describe the lateral connectivity of V1 (\cite{mumford}, \cite{edge-stat}).

 We can interpret the association field generated by the lateral connectivity as the expansion of the activity starting from the stimulation of one specific profile (i.e. one point $p_0$ of the feature space $\mathcal{G}$), and we may model it through an operator analogous to (\ref{apprheat}).\\
 We adapt the normalization operation proposed in \cite{coiflaf} to our setting by taking the integrals w.r.t. the spherical Hausdorff measure associated to the cortical distance. For $\alpha=1$ and $Q \equiv 1$, we obtain the operator $S$ applied to kernels $\mathcal{K} : \mathcal{G}\times\mathcal{G} \rightarrow \mathbb{R}$ as follows:
 \[S[\mathcal{K}](p,q) = \frac{\mathcal{K}^{(1)}(p,q)}{\int \mathcal{K}^{(1)}(p,q')d\mu(q')}, \text{ where } \mathcal{K}^{(1)}(p,q) = \frac{\mathcal{K}(p,q)}{\int \mathcal{K}(p,q')d\mu(q') \; \int \mathcal{K}(p',q)d\mu(p')}.\]
 We then define the propagation operator
 \begin{equation} H_t f \: (p) := \int S\left[s\left(K_t\right)\right](p,q) \; f(q) \: d\mu(q),\end{equation}
 where $S$ is applied to the kernel $K_t$ divided by the norm $t$ of the filters and passed through a sigmoidal activation function
 \[s(z) = \frac{1}{1 + \exp(-z)}.\]
 Note that, for $d_t(p,q) \rightarrow +\infty$, the term $s\left(K_t\right)$ is an exponentially decaying function of $\frac{d_t^2(p,q)}{2}$:
 \[s\left(K_t(p,q)\right) = \frac{\exp\left(-\frac{d_t^2(p,q)}{2}\right)}{\frac{1}{e} + \exp\left(-\frac{d_t^2(p,q)}{2}\right)} \sim e \cdot \exp\left(-\frac{d_t^2(p,q)}{2}\right).\]
 We finally provide a description of the propagation of neural activity around a point $p_0$ by defining
 \begin{equation}\label{propK} K^{p_0}_{t,n} :=  H_t^{n-1} K^{p_0}_t,\end{equation}
 where $K^{p_0}_t(p) \equiv K^{p_0}_{t,1}(p) := S\left[s\left(K_t\right)\right](p,p_0)$.\\
 
 Remark \ref{taylor} suggests that, with a much more rough approximation, one can even think of modeling the cortical connectivity as an iteration of (a proper normalization of) $K_t$ itself instead of a Gaussian kernel. In this case, one may consider an activation function of the type $s(z) = \max(z-T, 0)$, which simply puts to zero all values below a certain threshold $T$.

\subsection{Numerical simulations and discussion}\label{simulations}

 We now present numerical simulations of the modeled propagation, both through the mechanism of repeated integrations described in Section \ref{kernel_iter}, and by suitably approximating the diffusion process introduced in Section \ref{sturm}. We first take into account the family of Gabor filters (\ref{mother}); we then consider the example of the bi-dimensional feature space induced by an orientation map, as in Section \ref{nondiff}.
 \paragraph{\textbf{Gabor filters}}
 In the Gabor case, the connectivity kernel lives in $\mathbb{R}^2 \times S^1$. Figure \ref{KconvK}b displays the projection onto the retinal plane of the kernel around the starting point $(0,0,0)$, obtained after four steps of the iterative rule (\ref{propK}). Figure \ref{KconvK}a displays the real part of the filter $\psi_{(0,0,0)}$, corresponding to the starting point. The propagation has been implemented on $]-1,5,1.5[\times ]-2,2[ \times ]-1.5,1.5[ \subseteq \mathbb{R}^2 \times S^1$, discretized with step 0.075 in $x$ and $y$ and with step 0.15 in $\theta$. We refer to our work \cite{neuro} for more technical details. The function 
 \begin{equation}\label{Kn} (x,y,\theta) \mapsto K_4^{(0,0,0)}(x,y,\theta) \end{equation}
 \begin{figure}[htbp!]
  \centering
  \includegraphics[width=.65\textwidth]{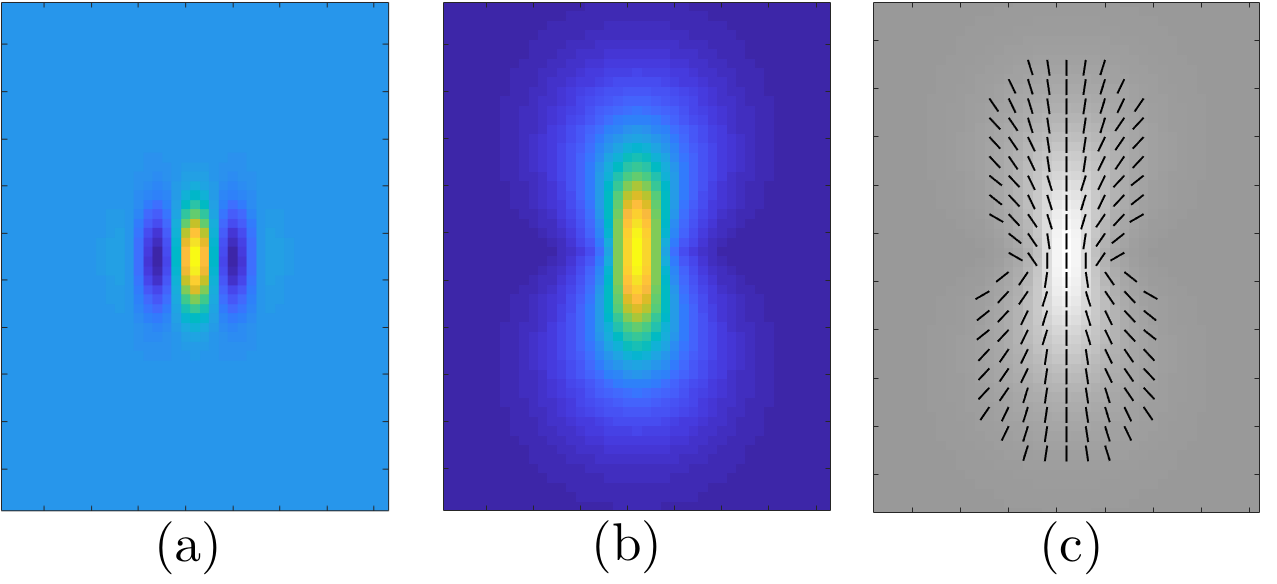}
  \caption{Propagation of the neural activity in $\mathbb{R}^2 \times S^1$ through repeated integrations of the kernel. (a) The starting filter $\psi_{(0,0,0)}$ (real part). (b) The kernel $K_4^{(0,0,0)}(x,y,\theta)$ obtained at the fourth step of propagation, projected down onto the $(x,y)$ plane by taking the maximum over $\theta$. (c) The corresponding maximizing orientations $\overline{\theta}(x,y)$: at every location $(x,y)$, an oriented segment with angle $\overline{\theta}(x,y)$ is displayed -- only where $K_4^{(0,0,0)}\big(x,y,\overline{\theta}(x,y)\big)$ is over a threshold. \label{KconvK}}
\end{figure}
 has been projected onto the $(x,y)$ plane by taking the maximum over the variable $\theta$. Figure \ref{KconvK}c shows the orientation $\overline{\theta}(x,y)$ maximizing the value of the kernel, at each location $(x,y)$ where this value exceeds a threshold. Specifically,
 \[ \overline{\theta}(x,y) := \arg\max_\theta K_4^{(0,0,0)}(x,y,\theta). \]
 Now, as shown in Section \ref{gaborcase}, the cortical distance $d$ obtained from the family of filters (\ref{mother}) is locally equivalent to a Riemannian distance on the space $\mathbb{R}^2\times S^1$. In such a case, it is possible to discretize the Laplace-Beltrami operator by means of a \emph{graph Laplacian operator} associated to the distance. Specifically, given a simple undirected weighted graph $\Gamma$ with vertices $X=\{p_i\}_i$ equipped with weights $\{\mu_i\}_i$, and edges $E=\{e_{ij}\}_{i,j}$ equipped with weights $\{w_{ij}\}_{i,j}$, one can define for any function $f$ on $V$ the Laplacian operator onto the graph as
 \begin{equation}\label{glap} Lf (p_i) := \frac{1}{\mu_i} \sum_{j \: : \: p_i\sim p_j} w_{ij} \left( f(p_j) - f(p_j) \right),\end{equation}
 where $p_i\sim p_j$ means that there is an edge connecting $p_i$ and $p_j$.
 This operator, possibly with slightly different definitions from time to time, is widely used in shape analysis (see e.g. \cite{graph}, \cite{graphsegm}) to construct algorithms that keep trace of the geometry of the data, by means of parameterizations obtained through the eigenfunctions of $L$.
 In \cite{graphappr}, a graph approximation of a Riemannian manifold $M$ is constructed by taking the set of vertices $X$ to be an $\varepsilon$-net in $M$ with an associated discrete measure $\tilde{\mu} = \sum_i \mu_i \delta_{p_i}$ which approximates the volume $\mu$ of $M$. In the Gabor case, this allows to consider a simple rectangular grid as set of vertices, provided that the discretization step is sufficiently small. Moreover, this choice yields uniform weights $\mu_i$. The set of edges with relative weights is then defined depending on the distance. Namely, for $\rho \gg \varepsilon$, two vertices $p_i,p_j \in X$ are connected by an edge iff $d_{ij} \equiv d(p_i,p_j)<\rho$, and in this case one defines the edge weight $\quad w_{ij} := \kappa \: \mu_i \mu_j$, where $\kappa$ is a normalization constant depending on the dimension of the manifold. Note that the vertices can be chosen to be any $\varepsilon$-net, since the geometry of the manifold is encoded in the definition of the edges, i.e. in the choice of the neighborhood over which the sum (\ref{glap}) is taken. We implemented the graph Laplacian associated to this approximating graph, in order to obtain a discretized heat equation on the same sampling of $\mathbb{R}^2\times S^1$ as before, with initial datum the Dirac delta $f_0 = \delta_{(0,0,0)}$ in this three-dimensional space, see Figure \ref{diff_graph}a. We took 100 iterations of the discretized differential equation with a time step of 0.01. We then projected the updated datum $f(x,y,\theta)$ onto the image plane, again by taking the maximum over $\theta$, and displayed the maximizing orientations as in the preceding case. See Figure \ref{diff_graph}b-c.\\
 The results obtained are compatible with the geometrical properties of V1 lateral connections, and the pattern of the maximizing orientations turns out to be consistent with the perceptual principles of association fields.
 \begin{figure}[htbp!]
  \centering
  \includegraphics[width=.9\textwidth]{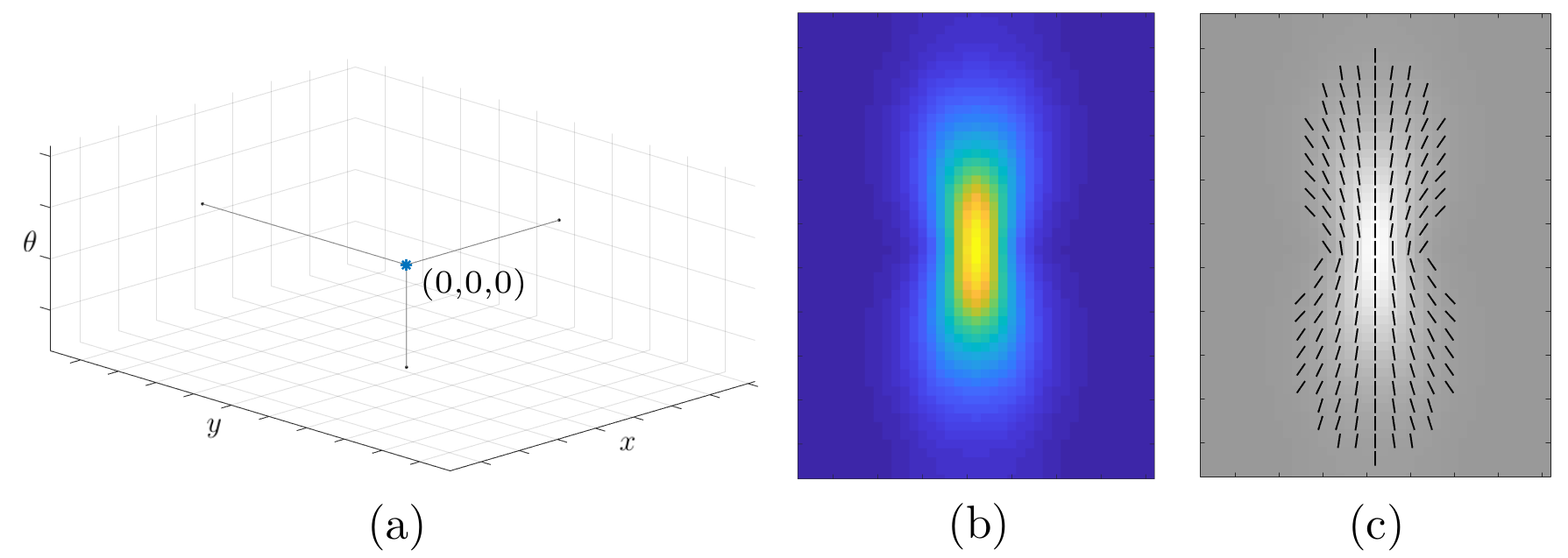}
  \caption{Propagation of the neural activity through the discretized heat equation associated to the graph Laplacian in $\mathbb{R}^2 \times S^1$. (a) The starting point $(0,0,0)$, displayed as a blue asterisk in this 3D space. (b) The updated kernel, projected down onto the $(x,y)$ plane by taking the maximum over $\theta$. (c) The corresponding maximizing orientations $\overline{\theta}(x,y)$, as in Figure \ref{KconvK}.}\label{diff_graph}
 \end{figure}
 
 \paragraph{\textbf{Orientation map}}
 We now consider the sub-family of Gabor filters $\{\tilde{\psi}_{x,y}\}_{x,y}$ defined by an orientation map $\Theta$ through $\tilde{\psi}_{x,y}=\psi_{x,y,\Theta(x,y)}$, and the corresponding metric structure onto $\mathcal{G}_\Theta = \mathbb{R}^2$. We generated an orientation map $\Theta$ through superposition of plane waves with random phases, as described in \cite{petitot}, and we chose its central point $(0,0)$ as a starting point: see Figure \ref{pw}(left). The corresponding filter $\tilde{\psi}_{0,0}$ is displayed in Figure \ref{pw}(right). Note that its orientation $\Theta(0,0)$ is determined by the chosen orientation map.
 \begin{figure}[htbp!]
  \centering
  \includegraphics[width=.5\textwidth]{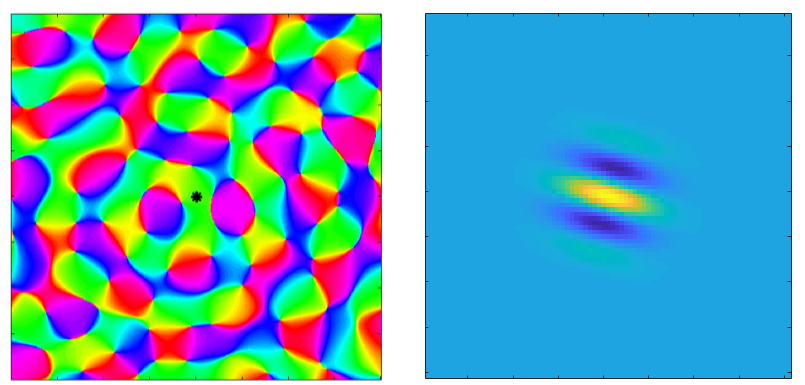}
  \caption{Left: the orientation map $\Theta$, generated through superposition of plane waves. The point $(0,0)$ is highlighted in black. Right: the starting filter $\tilde{\psi}_{0,0}$ (real part). \label{pw}}
 \end{figure}
 Again, we implemented the propagation of neural activity through iteration of the kernel onto the 2D feature space, and we displayed the updated kernel with color-coded intensity (Figure \ref{Kpw}a), as well as the orientations $\Theta(x,y)$ corresponding to points $(x,y)$ where the kernel exceeds a threshold (Figure \ref{Kpw}b). As a sampling of the feature space, we took $]-2,2[\times ]-2,2[ \subseteq \mathbb{R}^2=\mathcal{G}_\Theta$, discretized with step 0.05 for both $x$ and $y$. Finally, recall that the cortical distance on $\mathcal{G}_\Theta$ can be seen as the restriction of the Gabor distance to $\Sigma = \{\big(x,y,\Theta(x,y)\big)\}_{x,y} \subseteq \mathbb{R}^2 \times S^1$. This is still locally equivalent to a Riemannian metric on $\mathcal{G}_\Theta$. Note that, for $\sigma^2 = A\lambda \ll 1$, this approximates the distance $d_\Sigma$ of Section \ref{nondiff}. We implemented the graph Laplacian operator associated to this metric on $\mathcal{G}_\Theta$, and the corresponding discretized heat equation with initial datum $\delta_{(0,0)}$. The results for 150 iterations of the discretized equation, with a time step of 0.01, are displayed in Figure \ref{Lpw}. Note that in this case we do not need to project the connectivity kernels onto the image plane to visualize them, since the whole propagation already lives in a bidimensional space.\\
 \begin{figure}[htbp!]
  \centering
  \includegraphics[width=.6\textwidth]{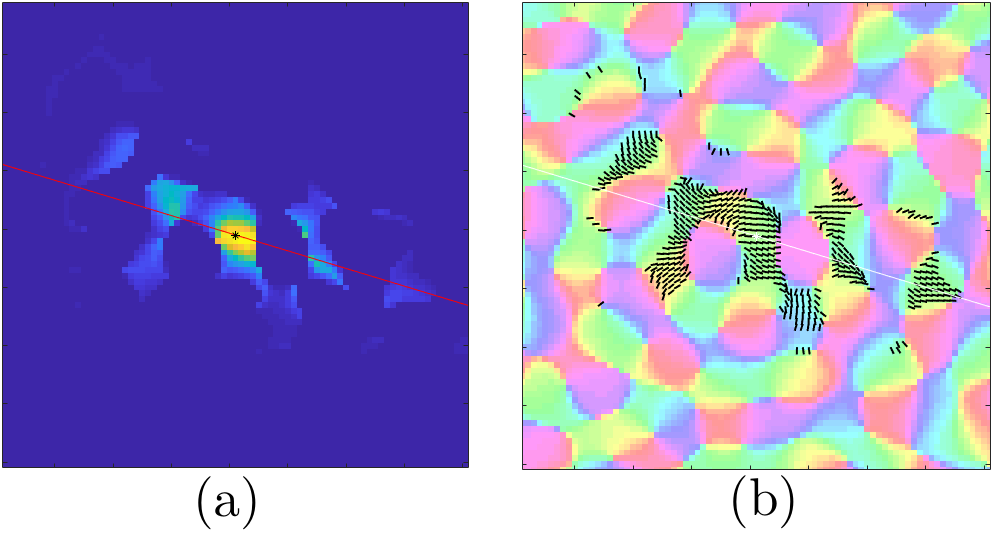}
  \caption{Propagation of the neural activity onto $\mathcal{G}_\Theta$ through repeated integrations of the kernel. (a) The propagated kernel around $(0,0)$, obtained after four iterations. A black asterisk shows the starting point $(0,0)$, and the orientation $\Theta(0,0)$ is highlighted by a red line superposed onto the image. (b) At each point $(x,y)$ where the kernel exceeds a threshold, the corresponding orientation $\Theta(x,y)$ is displayed through an oriented segment superposed onto the orientation map. \label{Kpw}}
 \end{figure}
 Again, the computed kernel spreads along the axis of the orientation $\Theta(0,0)$; moreover, it propagates in a \emph{patchy} way, with peaks in the regions of the map whose orientation values are close to $\Theta(0,0)$. This behavior has been observed experimentally by tracking the spreading of neural activity through biocytin injections, and by comparing it with the underlying orientation preference map \cite{bosking}.\\
 \begin{figure}[htbp!]
  \centering
  \includegraphics[width=.6\textwidth]{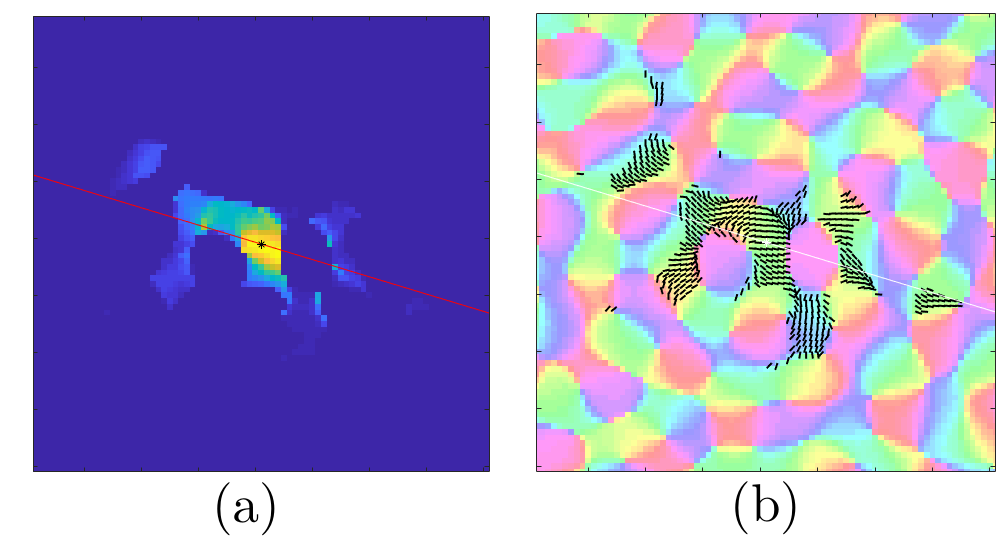}
  \caption{Propagation of the neural activity through the discretized heat equation associated to the graph Laplacian on $\mathcal{G}_\Theta$ with initial datum $\delta_{(0,0)}$. (a) The updated 2D kernel around $(0,0)$, with a black asterisk showing the starting point $(0,0)$, and a red line highlighting the orientation $\Theta(0,0)$. (b) At each point $(x,y)$ where the kernel exceeds a threshold, the corresponding orientation $\Theta(x,y)$, as in Figure \ref{Kpw}.}\label{Lpw}
 \end{figure}
 
 More simulations can be found in our work \cite{neuro}, including an example on curvature-selective neurons as well as the application of our construction to a family of \emph{numerically known} filters obtained through an optimization algorithm.

\section*{Acknowledgments}
 The authors have been supported by Horizon 2020 Project ref. 777822: GHAIA and PRIN 2015 ``Variational and perturbative aspects of nonlinear differential problems''.


\begin{thebibliography}{9}
\addcontentsline{toc}{chapter}{Bibliography}

\bibitem{abb}  A. A. Agrachev, D. Barilari, U. Boscain, \textit{A Comprehensive Introduction to sub-Riemannian Geometry}, Cambridge University Press, Cambridge (2019).

\bibitem{bosking} W. Bosking, Y. Zhang , B. Schoenfield, D. Fitzpatrick, \textit{Orientation selectivity and the arrangement of horizontal connections in tree shrew striate cortex}, J Neurosci 17(6), 2112-2127 (1997).

\bibitem{bresscow03} P. C. Bressloff, J. D. Cowan, \textit{The functional geometry of local and long-range connections in a model of V1}, J. Physiol. Paris, 97, 2-3, 221-236 (2003).

\bibitem{bresscow02}  P.C. Bressloff, J.D. Cowan, M. Golubitsky, P.J. Thomas, M.C. Wiener, \textit{What Geometric Visual Hallucinations Tell Us about the Visual Cortex}, Neural Computation, 14, 473-491 (2002).

\bibitem{graphappr}  D. Burago, S. Ivanov, Y. Kurylev, \textit{A graph discretization of the Laplace-Beltrami operator}, J. Spectr. Theory 4, 675-714 (2014).

\bibitem{cittimanfr} G. Citti, M. Manfredini, \textit{Implicit function theorem in Carnot-Carath\'{e}odory spaces}, Comm. in Cont. Math., Vol. 8, 5, 657-680 (2006).

\bibitem{cs06} G. Citti, A. Sarti, \textit{A Cortical Based Model of Perceptual Completion in the Roto-Translation Space}, Journal of Mathematical Imaging and Vision archive, Vol. 24, no. 3, p.307-326 (2006).

\bibitem{neuromat} G. Citti, A. Sarti (eds.), \textit{Neuromathematics of Vision}, Lecture Notes in Morphogenesis, Springer (2014).

\bibitem{coiflaf} R. R. Coifman, S. Lafon, \textit{Diffusion maps}, Appl. Comput. Harmon. Anal. 21, 5-30 (2006).

\bibitem{daug} J. G. Daugman, \textit{Uncertainty relation for resolution in space, spatial frequency, and orientation optimized by two-dimensional visual cortical filters}, J. Opt. Soc. Am. A2, 1160-1169 (1985).

\bibitem{deng} C.-X. Deng, S. Li, Z.-X. Fu, \textit{The reproducing kernel Hilbert space based on wavelet transform}, Proceedings of the 2010 International Conference on Wavelet Analysis and Pattern Recognition, Qingdao, 370-374 (2010).

\bibitem{federer} H. Federer, \textit{Geometric Measure Theory}, Springer-Verlag (1969).

\bibitem{field} D. J. Field, A. Hayes, R. F. Hess, \textit{Contour integration by the human visual system: evidence for a local “association field”}, Vision Res 33 173-193, (1993).

\bibitem{rectif} B. Franchi, R. Serapioni, F. Serra Cassano, \textit{Rectifiability and perimeter in the Heisenberg group}, Math. Ann. 321, 479-531 (2001).

\bibitem{gilbert} C. D. Gilbert, A. Das, M. Ito, M. Kapadia, G. Westheimer, \textit{Spatial integration and cortical dynamics}, Proceedings of the National Academy of Sciences USA, Vol. 93, 615-622 (1996).

\bibitem{gilwie} C. D. Gilbert, T.N. Wiesel, \textit{Morphology and intracortical projections of functionally identified neurons in cat visual cortex}, Nature 280, 120-125 (1979).

\bibitem{galri} M. Galli, M. Ritor\'{e}, \textit{Existence of isoperimetric regions in contact sub-Riemannian manifolds}, Journal of Mathematical Analysis and Applications, Vol. 397, Issue 2, 697-714 (2013).

\bibitem{gilwie89} C. D. Gilbert, T.N. Wiesel, \textit{Columnar specificity of intrinsic horizontal and corticocortical connections in cat visual cortex}, J. Neurosci. 9, 2432-2442 (1989).

\bibitem{gilwu} C. D. Gilbert, L. Wu, \textit{Top-down influences on visual processing}, Nature Reviews Neuroscience 14, 350-363 (2013).

\bibitem{hausd} F. Hausdorff, \textit{Dimension und \"{a}usseres Mass}, Mathematische Annalen, 79 (1-2), 157-179, 1918.

\bibitem{hoffman} W.C. Hoffman, \textit{The visual cortex is a contact bundle}, Appl. Math. Comput. 32, 137-167 (1989).

\bibitem{HW} D. H. Hubel, T. N. Wiesel, \textit{Receptive fields, binocular interaction and functional architecture in the cat visual cortex}, J. Physiol. (London) 160, 106-154 (1962).

\bibitem{hubel} D. H. Hubel, \textit{Eye, brain, and vision}, New York, WH Freeman (Scientific American Library) (1987).

\bibitem{jonpal} J. P. Jones, L. A. Palmer, \textit{An evaluation of the two-dimensional Gabor filter model of simple receptive fields in cat striate cortex}, J. Neurophysiol. 58, 1233-1258 (1987).

\bibitem{reciprocal} Z. F. Kisvarday, U. T. Eysel, \textit{Cellular organization ofreciprocal patchy networks in layer III of cat visual cortex(area 17)}, Neuroscience 46, 275-286 (1992).

\bibitem{lee} T. S. Lee, \textit{Image Representation Using 2D Gabor Wavelets}, IEEE Transactions on Pattern Analysis and Machine Intelligence, Vol 18, No. 10 (1996).

\bibitem{graph} B. L\'{e}vy, \textit{Laplace-Beltrami eigenfunctions: towards an algorithm that understands geometry}, Proc. of Shape Modeling and Applications,
page 13 (2006).

\bibitem{complex} L. M. Martinez, J.-M. Alonso, \textit{Complex receptive fields in primary visual cortex}, Neuroscientist, 9(5), 317-331 (2003).

\bibitem{neuro} N. Montobbio, G. Citti, A. Sarti, \textit{Receptive profiles induce functional architecture of V1}, J Comput Neurosci (2019).

\bibitem{mumford} D. Mumford, \textit{Elastica and computer vision}, in \textit{Algebraic Geometry and its Applications}, 507-518. ed. C. Bajaj, Springer-Verlag (1993).

\bibitem{nsw} A. Nagel, E. M. Stein, S. Wainger, \textit{Balls and metrics defined by vector fields I: Basic properties}, Acta Math. 155, 103-147 (1985).

\bibitem{petitot} J. Petitot, \textit{Neurog\'{e}om\'{e}trie de la vision - Mod\`{e}les math\'{e}matiques et physiques des architectures fonctionnelles}, \'{E}ditions de l'\'{E}cole Polytechnique (2008).

\bibitem{petitond} J. Petitot, Y. Tondut, \textit{Vers une neuro-g\'{e}om\'{e}trie. Fibrations corticales, structures de contact et contours subjectifs modaux}, Math\'{e}matiques , Informatique et Sciences Humaines, vol. 145, 5-101, EHESS, Paris (1999).

\bibitem{rkwavelet}  D. Pravica, N. Randriampiry, M. Spurr, \textit{Reproducing kernel bounds for an advanced wavelet frame via the theta function}, Appl. Comput. Harmon. Anal., Vol.33, 79-108 (2012).

\bibitem{graphsegm} M. Reuter, S. Biasotti, D. Giorgi, G. Patan\`{e}, M. Spagnuolo, \textit{Discrete Laplace-Beltrami operators for shape analysis and segmentation}, Computers \& Graphics, 33 (3), 381-390 (2009).

\bibitem{edge-stat} G. Sanguinetti, G. Citti, A. Sarti, \textit{A model of natural image edge co-occurrence in the rototranslation group}, J. Vis. 10(14) (2010).

\bibitem{scp} A. Sarti, G. Citti, J. Petitot, \textit{The symplectic structure of the visual cortex}, Biological Cybernetics, Volume 98, Issue 1, 33-48 (2008).

\bibitem{sturm95b} K.-T. Sturm, \textit{On the geometry defined by Dirichlet forms}, Seminar on Stochastic Analysis, Random Fields and Applications (E. Bolthausen et al., eds.) 231-242. Birkh\"{a}user, Boston (1995).

\bibitem{sturm98} K.-T. Sturm, \textit{Diffusion processes and heat kernels on metric spaces}, Ann. Probab. 26(1), 1-55 (1998).

\bibitem{vision} B. Wandell, \textit{Foundations of Vision: Behavior, Neuroscience and Computation}, Sinauer Associates Inc. (1995).

\bibitem{warner} F. W. Warner, \textit{Foundations of Differentiable Manifolds and Lie Groups}, Springer-Verlag, Berlin (1983).

\bibitem{quasimetric} W. A. Wilson, \textit{On Quasi-Metric Spaces}, American Journal of Mathematics, 53(3), 675 (1931). 

\bibitem{yeh} J. Yeh, \textit{Real Analysis. Theory of Measure and Integration}, World Scientific Publishing Company (2006).

\bibitem{yenfinkel} S. C. Yen, L. H. Finkel, \textit{Extraction of perceptually salient contours by striate cortical networks}, Vision Res 38(5):719-741 (1998).

\bibitem{zucker} S.W. Zucker, \textit{Differential geometry from the Frenet point of view: boundary detection, stereo, texture and color}, in: N. Paragios, Y. Chen, O. D. Faugeras (eds.), \textit{Handbook of Mathematical Models in Computer Vision}, 357-373. Springer, US (2006).

\end{thebibliography}
\end{document}